\begin{document}
\theoremstyle{plain}
\newtheorem{Thm}{Theorem}
\newtheorem{Cor}[Thm]{Corollary}
\newtheorem{Ex}[Thm]{Example}
\newtheorem{Con}[Thm]{Conjecture}
\newtheorem{Main}{Main Theorem}
\newtheorem{Lem}[Thm]{Lemma}
\newtheorem{Prop}[Thm]{Proposition}

\theoremstyle{definition}
\newtheorem{Def}[Thm]{Definition}
\newtheorem{Note}[Thm]{Note}
\newtheorem{Question}[Thm]{Question}

\theoremstyle{remark}
\newtheorem{notation}[Thm]{Notation}
\renewcommand{\thenotation}{}

\errorcontextlines=0
\numberwithin{equation}{section}
\numberwithin{Thm}{section}

\title[Extension of Asgeirsson's Theorem]%
   {An Extension of Asgeirsson's Mean Value Theorem for Solutions of the ultra-hyperbolic Equation \\
   in Dimension Four}
\author{Guillem Cobos}\address{
          School of STEM\\
          Institute of Technology, Tralee \\
          Clash \\
          Tralee  \\
          Co. Kerry \\
          Ireland.}
\email{guillem.cobos93@gmail.com}
\author{Brendan Guilfoyle}
\email{brendan.guilfoyle@ittralee.ie}

\begin{abstract} In 1937 Asgeirsson established a mean value property for solutions of the general ultra-hyperbolic equation in $2n$ variables. In the case of four variables, it states that the integrals of a solution over certain pairs of conjugate circles are the same. In this paper we extend this result to non-degenerate conjugate conics, which include the original case of conjugate circles and adds the  new case of conjugate hyperbolae. 

The broader context of this result is the geometrization of Fritz John's 1938 analysis of the ultra-hyperbolic equation. Solutions of the equation arise as the compatibility for functions on line space to come from line integrals of functions in Euclidean 3-space. The introduction of the canonical neutral Kaehler metric on the space of oriented lines clarifies the relationship and broadens the paradigm to allow new insights.

In particular, it is proven that a solution of the ultra-hyperbolic equation has the mean value property over any pair of curves that arise as the image of John's conjugate circles under a conformal map. These pairs of curves are then shown to be conjugate conics, which include circles and hyperbolae. 

John identified conjugate circles with the two rulings of a hyperboloid of 1-sheet. Conjugate hyperbolae are identified with the two rulings of either a piece of a hyperboloid of 1-sheet or a hyperbolic paraboloid.
\end{abstract}

\vspace*{-5mm}

\keywords{ultra-hyperbolic equation, Asgeirsson's theorem, conjugate conics, pseudo-conformal geometry.}
\subjclass[2010]{53A25,35Q99}
\maketitle
\tableofcontents

\section{Introduction and Results}

A twice continuously differentiable function $u$ of four variables is said to be a solution of the ultra-hyperbolic equation if it satisfies

\begin{equation}\label{UHE}
\frac{\partial^2 u}{\partial x_1^2} + \frac{\partial^2 u}{\partial x_2^2} -
\frac{\partial^2 u}{\partial x_3^2} -
\frac{\partial^2 u}{\partial x_4^2} = 0.
\end{equation}
In 1937 Asgeirsson's proved \cite{LA} that a solution of the ultra-hyperbolic equation satisfies 

\begin{equation} \label{asgeirsson}
\int_0^{2\pi} u(a + r\cos\theta, b+ r\sin\theta, c, d) d\theta
= 
\int_0^{2\pi} u(a, b,c+r\cos\theta, d+r\sin\theta) d\theta,
\end{equation}
for all $a,b,c,d,r\in{\mathbb R}$ with $r\geq0$. This mean value theorem was subsequently considered by Fritz John \cite{fjohn} and discussed at length by Courant and Hilbert \cite{couranthilbert}. Indeed, John showed that the mean value property is sufficient for a function to satisfy the ultra-hyperbolic equation, thus allowing one to define generalised solutions of a second order partial differential equation that were merely continuous - see the discussion on page 301 of, and Theorem 1.1 of \cite{fjohn}.

Moreover, John states in passing that the integrals can be taken over "conjugate conics" if a "certain variable of integration is used". He notes that the case of conjugate hyperbolae has not been considered, and that it is not directly amenable to the methods of his proof, which utilizes affine maps  (p306 of \cite{fjohn}). In this paper we fill this gap by utilizing the conformal geometry of line space and thus extending the method from affine maps to projective maps.

From a geometric perspective, Asgeirsson's result can be viewed in ${\mathbb R}^4$ endowed with the flat indefinite metric of signature (2,2):
\[
g=dx_1^2+dx_2^2-dx_3^2-dx_4^2.
\]
Thus, the second order ultra-hyperbolic operator is the Laplacian of $g$ and solutions of the ultra-hyperbolic equation are the harmonic functions. Moreover, the two curves over which the mean value is computed in (\ref{asgeirsson}) are circles of center $(a,b,c,d)$ and radius $r$, one of them lying in the $x_1x_2-$plane and the other in the $x_3x_4-$plane. We refer to these as conjugate circles and throughout we denote the pair $({\mathbb R}^4,g)$ simply by ${\mathbb R}^{2,2}$.

Note that the induced metric on one plane is positive definite, on the other it is negative definite and that the planes are mutually orthogonal. Moreover, both curves are round circles in the metric induced on the planes by $g$.

In this paper Asgeirsson's theorem is extended to more general pairs of curves, called non-degenerate conjugate conics. Our main result can be stated as follows:

\vspace{0.1in}
\begin{Thm}\label{t:1} 

Let $u$ be a solution of the ultra-hyperbolic equation (\ref{UHE}). Let $S,S^\perp$ be a pair of non-degenerate conjugate conics in ${\mathbb R}^{2,2}$. 

Then
\[
\int_S u \ dl = \int_{S^\perp} u \ dl,
\]
where $dl$ is the line element induced on the curves $S,S^\perp$ by the flat metric $g$.
\end{Thm}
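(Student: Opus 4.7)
The plan is to reduce the theorem to the classical Asgeirsson identity (\ref{asgeirsson}) by exploiting the pseudo-conformal geometry of $\mathbb{R}^{2,2}$. Concretely, I would produce a conformal diffeomorphism $\Phi$ of (an open subset of) $\mathbb{R}^{2,2}$ sending a standard pair of conjugate circles $(S_{0},S_{0}^{\perp})$ onto the given non-degenerate conjugate conic pair $(S,S^{\perp})$, and then transport the Asgeirsson identity through $\Phi$. This genuinely broadens John's affine approach because the relevant conformal group of $\mathbb{R}^{2,2}$ is 15-dimensional and includes inversions, which can map bounded circles to unbounded hyperbolae.

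The first step is the conformal covariance of the ultra-hyperbolic operator in dimension four. If $\Phi^{*}g=\Omega^{2}g$, the transformation rule for the conformal Laplacian (which coincides with $\Delta$ since $g$ is flat and both sides live between flat spaces) becomes, in dimension $n=4$,
\[
\Delta\bigl(\Omega\,(u\circ\Phi)\bigr)\;=\;\Omega^{3}\,(\Delta u)\circ\Phi,
\]
so that $v:=\Omega\,(u\circ\Phi)$ is a solution of (\ref{UHE}) on the source whenever $u$ is a solution on the target. Simultaneously the induced line element scales by the conformal factor, $\Phi^{*}dl=\Omega\,dl_{0}$ on any curve, where $dl_{0}$ denotes arc length with respect to $g$ on the preimage curve. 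The two factors cancel exactly:
\[
\int_{S}u\,dl\;=\;\int_{S_{0}}\Omega\,(u\circ\Phi)\,dl_{0}\;=\;\int_{S_{0}}v\,dl_{0},
\]
and similarly for $(S^{\perp},S_{0}^{\perp})$. Applying the classical Asgeirsson identity to $v$ on the pair of conjugate circles then yields the desired equality for $u$ on $(S,S^{\perp})$. This is also the geometric content of John's remark about a "certain variable of integration": the correct choice is $dl$ as measured in the ambient flat metric $g$, which is precisely the object that transforms by $\Omega$ under $\Phi$.

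The second step is to show that every non-degenerate conjugate conic pair $(S,S^{\perp})$ in $\mathbb{R}^{2,2}$ lies in the orbit of a standard pair of conjugate circles under the $(2,2)$-conformal group. Here I would exploit the identification outlined in the abstract: conjugate conic pairs correspond, via the neutral K\"ahler line-space picture, to the two rulings of a doubly-ruled quadric in $\mathbb{R}^{3}$ (hyperboloid of one sheet or hyperbolic paraboloid), and the conformal group of $\mathbb{R}^{3}$ lifts to a subgroup of the conformal group of line space that acts transitively on the relevant family of such quadrics. A normal-form argument — successively applying translations, $O(2,2)$-rotations, dilations and at most one inversion — should reduce an arbitrary non-degenerate pair to the standard circle pair.

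The main obstacle is precisely this orbit/classification step: one has to verify both that the space of non-degenerate conjugate conic pairs is exhausted by the conformal orbit of the circle pair (the non-trivial direction being that every hyperbolic pair is reached by at least one inversion, which is where John's affine methods failed), and that in the unbounded hyperbolic case the change of variables and the resulting improper integrals are still well posed. Care with parametrisation, orientation, and the behaviour of $\Phi$ near its singular set (the conformal "light cone at infinity") will be essential.
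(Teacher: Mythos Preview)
Your first step is correct and is exactly what the paper does: the conformal covariance $\Delta(\Omega\,(u\circ\Phi))=\Omega^{3}(\Delta u)\circ\Phi$ in dimension four, together with $\Phi^{*}dl=\Omega\,dl_{0}$, reduces the theorem to the classical Asgeirsson identity applied to $v=\Omega\,(u\circ\Phi)$ on the standard circle pair.

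For the second step your route diverges from the paper's and contains a slip. The \emph{conformal} group of $\mathbb{R}^{3}$ does not act on lines (sphere inversions send lines to circles); what acts on line space as the full 15-dimensional conformal group of $\mathbb{R}^{2,2}$ is the \emph{projective} group $PGL(4,\mathbb{R})$, which is the precise sense in which the paper ``extends from affine to projective maps''. Your alternative normal-form reduction directly in $\mathbb{R}^{2,2}$ is viable in principle, but the paper bypasses it entirely with a cleaner intrinsic argument: for any three pairwise skew points $q,q',q''$ on a non-degenerate conic $S$, the conjugate conic is exactly the triple intersection of isotropic cones $S^{\perp}=C_{q}\cap C_{q'}\cap C_{q''}$. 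Conformal maps preserve isotropic cones, hence preserve this construction, so the orbit question collapses to showing that the conformal group of $\mathbb{R}^{2,2}$ acts transitively on \emph{ordered triples of skew points}. That is a short argument (translate one point to $0$, invert to send the second to $\infty$, use $O(2,2)$ and if necessary the signature-swap $(a,b,c,d)\mapsto(c,d,a,b)$ to place the third at $e_{1}$), and it treats the circular and hyperbolic cases uniformly without any case-by-case normal form or any detour through doubly-ruled surfaces in $\mathbb{R}^{3}$.
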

\vspace{0.1in}

To define the pairs of curves, introduce the following nomenclature. Consider a pair of orthogonal planes $\pi, \pi^\perp$ through a point $O$ in ${\mathbb R}^{2,2}$, where orthogonality is with respect to the neutral metric on $\mathbb{R}^{2,2}$. If either plane is totally null (the induced metric is identically zero on such a plane) then $\pi=\pi^\perp$. The induced metric on $\pi$ can be definite or indefinite but is assumed non-degenerate. Thus we exclude all degenerate cases in what follows - these cases will be dealt with in a separate paper \cite{Cogui2}.

A pair of {\em non-degenerate conjugate conics} consists of a pseudo-circle $S$ of center $O$ and  square-radius $c^2$ in $\pi$; and a pseudo-circle $S^\perp$ of center $O$ and square-radius $-c^2$ in $\pi^\perp$, for any pair of non-degenerate orthogonal planes $\pi, \pi^\perp$.

A pair of non-degenerate conjugate conics can be either:
\begin{enumerate} 
\item[(i)]  a pair of {\em circles} with a common center and opposite square-radius in orthogonal definite planes, or 
\item[(ii)] a pair of {\em hyperbolae} with a common center and opposite square-radius in orthogonal indefinite planes.
\end {enumerate} 

\vspace{0.1in}

Theorem \ref{t:1} yields new mean value theorems for solutions of equation (\ref{UHE}). For example,
\begin{equation}\label{e:hyp}
\int_{-\infty}^{\infty} u(a + r\cosh\theta, b, c+ r\sinh\theta, d) d\theta
= 
\int_{-\infty}^{\infty} u(a, b+r\cosh\theta,c, d+r\sinh\theta) d\theta,
\end{equation}
integrating over a pair of {\em conjugate hyperbolae}. As the integrals are now over unbounded domains one also should assume that $u$ is integrable.

This result can be recast in a more geometric way as follows. In \cite{fjohn} the first link was made between the ultra-hyperbolic equation and straight lines in Euclidean 3-space. In particular, ${\mathbb R}^{2,2}$ is identified locally with the space of straight lines and then solutions of the equation (\ref{UHE}) arise from line integrals of functions on ${\mathbb R}^3$.

The space of all oriented lines is diffeomorphic to $TS^2$ and is endowed with a canonical metric ${\mathbb G}$ of signature (2,2) \cite{kahlermetric}. The metric is conformally flat, and so can be conformally mapped to ${\mathbb R}^{2,2}$ locally. Thus solutions of the ultra-hyperbolic equation give rise to solutions of the Laplace equation. Moreover, conjugate conics can be defined in $TS^2$ as discussed in detail in Section \ref{s:2.3}. 

This leads to a more geometric statement of our result:  

\vspace{0.1in}
\begin{Thm} \label{t:2}
Let $v:TS^2\rightarrow \mathbb R$ be a solution of the Laplace equation of the canonical metric $\Delta_{\mathbb G} v=0$. Let $S,S^\perp$ be a pair of non-degenerate conjugate conics in $TS^2$. 

Then the following integral equation holds
$$
\int_{S} v \ d\tau = \int_{S^\perp} v \ d\tau,
$$
where $d\tau$ is the line element induced by ${\mathbb G}$.
\end{Thm}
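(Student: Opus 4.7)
The plan is to reduce Theorem \ref{t:2} to Theorem \ref{t:1} using the local conformal flatness of $(TS^2,\mathbb{G})$ noted in the introduction. Locally there is a diffeomorphism $\phi: U \subset TS^2 \to V \subset \mathbb{R}^{2,2}$ and a positive factor $\Omega$ on $U$ with $\mathbb{G}= \Omega^2 \phi^* g$. The key dimensional coincidence is that in dimension four the conformal weight of scalar functions equals one, which matches exactly the conformal weight of the line element. More precisely, the conformally invariant Yamabe operator $L = \Delta + \frac{1}{6} R$ transforms as
\[
L_{\mathbb{G}}\bigl(\Omega^{-1}(u\circ \phi)\bigr) \;=\; \Omega^{-3}\, (L_{g} u)\circ \phi,
\]
and, since $g$ is flat and the canonical neutral Kähler metric $\mathbb{G}$ from \cite{kahlermetric} is scalar-flat, this reduces to the equivalence: $v$ satisfies $\Delta_{\mathbb{G}} v=0$ on $U$ if and only if $u := \Omega\cdot (v\circ \phi^{-1})$ is ultra-hyperbolic on $V$.

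I would then use the construction of conjugate conics in $TS^2$ given in Section \ref{s:2.3}, which (by design) ensures that $\phi$ carries any pair $S, S^\perp$ in $U$ to a pair of non-degenerate conjugate conics in $\mathbb{R}^{2,2}$. Because $\mathbb{G} = \Omega^2 \phi^* g$, the induced line elements are related by $d\tau = \Omega\, dl$. Combining this with the identity $u\circ \phi = \Omega\, v$ yields
\[
\int_{S} v\, d\tau \;=\; \int_{\phi(S)} (v\circ \phi^{-1})\,\Omega\, dl \;=\; \int_{\phi(S)} u\, dl,
\]
and the analogous identity holds for $S^\perp$. Applying Theorem \ref{t:1} to $u$ on $\mathbb{R}^{2,2}$ equates the two right-hand sides, and hence the two left-hand sides, proving the theorem.

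The main obstacle is the strictly local nature of the conformal equivalence: a pair of conjugate hyperbolae is non-compact and need not fit in a single chart $U$ on which $\phi$ is defined. To overcome this I would exploit the large conformal group of $TS^2$ to move any given pair into a convenient chart centered at $O$ and aligned with the planes $\pi, \pi^\perp$, reducing to the situation above; alternatively, one can cover $S \cup S^\perp$ by a finite collection of conformally flat charts and use the intrinsic nature of the integrand $v\, d\tau$ together with analytic continuation of ultra-hyperbolic solutions to patch the local identities into the global one. A secondary point to verify is that $\Delta_{\mathbb{G}}$ and the conformally invariant $L_{\mathbb{G}}$ agree on functions, which reduces to scalar flatness of $\mathbb{G}$, a property that should be readable from the explicit formula for the canonical metric given in \cite{kahlermetric}.
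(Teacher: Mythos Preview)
Your proposal is correct and is essentially the paper's own argument: the authors reduce Theorem \ref{t:2} to Theorem \ref{t:1} via the local conformal map $f:\mathbb{R}^{2,2}\to TS^2$ of Proposition \ref{p:conf}, use scalar-flatness of both metrics (their Propositions \ref{cfharmonic} and \ref{conformalsolutions}, which amount to your Yamabe-operator identity) to get $u=\Omega\,(v\circ f)$ ultra-hyperbolic, and then observe $d\tau=\Omega\,dl$ so that $\int_S u\,dl=\int_S v\,d\tau$. The only difference is that the paper works in a single explicit chart covering $|\xi|<1$ and does not discuss the chart-patching issue you raise for unbounded hyperbolae.
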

\vspace{0.1in}

In Section \ref{extended} the background to Asgeirsson's theorem and Fritz John's contributions are discussed. Details are given of the geometric under-pinning of our approach, as well as a proof that Theorems \ref{t:1} and \ref{t:2} are equivalent. 

The picture that emerges is that the splitting of ${\mathbb R}^{2,2}$ into conjugate planes in the original formulation with positive and negative definite metrics, extends to the case where the conjugate planes are both hyperbolic. The curves change from circles to pseudo-circles, but the integral identity remains the same.

In ${\mathbb R}^3$ the picture is richer. The image of a plane in ${\mathbb R}^{2,2}$ under the conformal map to $TS^2$, is a 2-parameter family of lines which twist around each other so that there is no orthogonal surface in ${\mathbb R}^3$. This line congruence enjoys the property that the images of (pseudo) circles are special ruled surfaces. John showed that when the two planes are definite, the conics are the two rulings of a 1-sheeted hyperboloid. We show that

\vspace{0.1in}
\begin{Thm}\label{t:3}
A pair of non-degenerate conjugate conics can be identified with a pair of rulings of the same surface in ${\mathbb{R}}^3$.

In the definite case, they are the two rulings of a 1-sheeted hyperboloid, in the hyperbolic case, they are either part of the two rulings of a 1-sheeted hyperboloid, or the two rulings of a hyperbolic paraboloid. 
\end{Thm}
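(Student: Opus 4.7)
The plan is to translate the statement into line geometry. Since each point of $TS^2$ is an oriented line in $\mathbb R^3$, any smooth curve in $TS^2$ corresponds to a one-parameter family of lines, hence to a ruled surface. Two curves in $TS^2$ rule the same surface precisely when the point-sets swept by their lines coincide. By the conformal identification of a patch of $TS^2$ with $(\mathbb R^{2,2},g)$ already developed in the paper, a pair of non-degenerate conjugate conics can be reduced to one of two normal forms: the conjugate circles of (\ref{asgeirsson}) or the conjugate hyperbolae of (\ref{e:hyp}). The goal is then to substitute each normal form into the explicit formula for the associated line in $\mathbb R^3$ and to identify the resulting swept surface.

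First I would fix a concrete coordinate realisation: via stereographic projection write points of $TS^2$ as pairs $(\mu,\nu)\in\mathbb C^2$ and record the standard formulas expressing the foot of perpendicular to the origin and the direction of the corresponding oriented line. This gives an explicit rational map $L\colon TS^2\to\{\text{lines in }\mathbb R^3\}$. Composing with the local conformal equivalence to $\mathbb R^{2,2}$ used in Theorems \ref{t:1} and \ref{t:2}, one obtains in each orthogonal plane $\pi,\pi^\perp$ a linear parametrisation of a two-parameter subcongruence of lines. Plugging in the canonical parametrisation $(r\cos\theta,r\sin\theta)$ in the definite case, respectively $(r\cosh\theta,r\sinh\theta)$ in the indefinite case, centred at the common point $O$, then yields two explicit one-parameter families of lines whose unions in $\mathbb R^3$ I would compute.

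Next I would eliminate the parameter $\theta$ from the family $L(\theta)$ to obtain an implicit equation for the surface swept out. Because the parametrisations are linear in $(\cos\theta,\sin\theta)$ or in $(\cosh\theta,\sinh\theta)$, after clearing denominators the resulting constraint on the ambient coordinates $(x,y,z)\in\mathbb R^3$ is polynomial of degree two. The swept set therefore lies on a quadric, and inspection of its quadratic part classifies it: in the definite case the quadratic part has signature $(2,1)$ and the quadric is a 1-sheeted hyperboloid, recovering John's identification; in the indefinite case the same computation either produces a 1-sheeted hyperboloid or, in a degenerate subcase where the quadratic part drops to rank two, a hyperbolic paraboloid. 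Repeating the computation for $S^\perp$ produces the same quadric but with the transverse family of generators, so $S$ and $S^\perp$ are exhibited as the two rulings of a doubly-ruled quadric. Since $(\cosh\theta,\sinh\theta)$ covers only one branch of the plane hyperbola, the corresponding rulings sweep only a portion of the hyperboloid, which accounts for the "part of" qualifier in the statement.

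The main technical obstacle is pinpointing when the hyperbolic case produces a hyperboloid versus a hyperbolic paraboloid. This corresponds to the drop in rank of the leading quadratic form of the implicit equation, equivalently to one of the two rulings becoming asymptotically parallel to a fixed direction. A projective viewpoint on the line congruence clarifies this: the two families of quadrics are projectively equivalent, and the paraboloid arises precisely as the limit in which the centre of the hyperboloid is sent to infinity along a direction determined by the ambient null cone of $g$. Once this degeneration is encoded as a single algebraic condition on the normal-form data of $(\pi,\pi^\perp,O,c)$, the trichotomy asserted in the theorem is complete.
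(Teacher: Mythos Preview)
Your proposal has a genuine gap in the reduction step. You write that ``a pair of non-degenerate conjugate conics can be reduced to one of two normal forms: the conjugate circles of (\ref{asgeirsson}) or the conjugate hyperbolae of (\ref{e:hyp}).'' This reduction is carried out in the paper by a conformal map of $\mathbb R^{2,2}$ (equivalently of $TS^2$), and indeed all non-degenerate conjugate conic pairs are conformally equivalent to one another. But conformal maps of line space do \emph{not} act by affine (or Euclidean) transformations of $\mathbb R^3$; they act projectively on lines. Consequently the ruled surfaces in $\mathbb R^3$ swept by conformally equivalent curves in $TS^2$ need not be of the same Euclidean type. Computing the ruled surface for the single pair (\ref{e:hyp}) therefore tells you nothing about the ruled surface of a general pair of conjugate hyperbolae, and in particular cannot produce both the hyperboloid and the paraboloid outcomes. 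You later speak of the paraboloid arising as a ``degenerate subcase'' or a projective ``limit,'' but if you have already collapsed everything to one normal form there is no parameter left to degenerate.

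The paper avoids this by never reducing to a normal form for the purposes of Theorem~\ref{t:3}. It works directly with the full two-parameter family of graphical conformal planes (equation (\ref{e:sect}), parameters $\alpha,\beta$) and the three-parameter family of non-graphical conformal planes (equation (\ref{e:nongcp}), parameters $\theta,\phi,H$), substitutes each into the incidence map $\Phi:TS^2\times\mathbb R\to\mathbb R^3$, and eliminates. The dichotomy you are looking for is not a rank-drop or limiting condition on a single normal form: it is precisely the split between graphical and non-graphical planes. Graphical planes (whether the induced metric is definite or indefinite) always give the quadric (\ref{e:ruling_gr}), a one-sheeted hyperboloid; non-graphical planes, which are necessarily indefinite, give a hyperbolic paraboloid. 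Your outline becomes correct if you replace the normal-form reduction by this case split and carry out the elimination separately in each family.
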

\vspace{0.1in}

The uniqueness of this situation is well-expressed by the classical result that states that, other than the plane, the only surfaces in ${\mathbb R}^3$ that have more than one ruling by straight lines are the 1-sheeted hyperboloid and the hyperbolic paraboloid \cite{hcv}. 

An outline of the proof of Theorem \ref{t:1} contained in this paper is as follows. For convenience, let $S_0$ and $S_0^\perp$ be the unit circles in the conjugate planes ${\mathbb R}^2\times(0,0)\subset{\mathbb R}^{2,2}$ and $(0,0)\times{\mathbb R}^2\subset{\mathbb R}^{2,2}$ respectively.

In Section \ref{extended} it is shown that a solution of the ultra-hyperbolic equation has the mean value property over any pair of curves $S,S^\perp$ that arise as the image of $S_0, S_0^\perp$ under an arbitrary conformal mapping of ${\mathbb R}^{2,2}$. We refer to this as the conformal extension of Asgeirsson's Theorem which can be expressed in oriented line space as:

\vspace{0.1in}
\begin{Thm} \label{t:4}
Let $v:TS^2\rightarrow \mathbb R$ be a solution of $\Delta_{\mathbb G} v=0$ and $f:TS^2 \rightarrow TS^2$ be a conformal map. Then
\[
\int_{f(S_0)} v \ d\tau = \int_{f(S_0^\perp)} v \ d\tau.
\]
where $d\tau$ is the line element induced by the canonical metric ${\mathbb G}$.
\end{Thm}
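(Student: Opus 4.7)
The plan is to reduce Theorem \ref{t:4} to the classical Asgeirsson identity on $\mathbb{R}^{2,2}$, exploiting the fact that $(TS^2,\mathbb{G})$ is locally conformally flat. Fix a local conformal diffeomorphism $\phi:U\subset\mathbb{R}^{2,2}\to\phi(U)\subset TS^2$ with $\phi^*\mathbb{G}=\Omega^2 g$, shrinking $U$ if necessary so that $S_0$, $S_0^\perp$, together with the images $f(\phi(S_0))$ and $f(\phi(S_0^\perp))$, all lie inside $\phi(U)$. Write $\Omega_f$ for the conformal factor of $f$, i.e.\ $f^*\mathbb{G}=\Omega_f^2\mathbb{G}$. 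The overall strategy is to pull everything back to $\mathbb{R}^{2,2}$, apply the original identity to a suitably re-weighted function, and push the result back.

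The first ingredient is the transformation of the induced line element: under any conformal rescaling $\tilde g=\Lambda^2 g$ one has $d\tau_{\tilde g}=\Lambda\, d\tau_g$ on a non-null curve. Composing the two changes of variable (first by $f$, then by $\phi$) gives
\[
\int_{f(\phi(S_0))} v\, d\tau_\mathbb{G} \;=\; \int_{S_0} (v\circ f\circ \phi)\,(\Omega_f\circ \phi)\,\Omega\; dl_g,
\]
and analogously with $S_0$ replaced by $S_0^\perp$ throughout.

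The second ingredient, which is the heart of the argument, is to show that the weighted pullback $w := (v\circ f\circ \phi)\,(\Omega_f\circ \phi)\,\Omega$ solves the flat ultra-hyperbolic equation $\Delta_g w=0$ on $\mathbb{R}^{2,2}$. This is a two-fold application of the conformal covariance of the Laplacian in dimension four. Since the canonical Kähler metric $\mathbb{G}$ is scalar flat (cf.\ \cite{kahlermetric}), the Laplace--Beltrami operator agrees with the conformally covariant Yamabe operator, and the covariance identity $L_{\Lambda^2 g}(\Lambda^{-1}u)=\Lambda^{-3}L_g u$ reduces, for scalar-flat conformal pairs, to the statement that $\Delta_{\Lambda^2 g}u=0$ is equivalent to $\Delta_g(\Lambda u)=0$. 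Applying this equivalence once to the pair $(\mathbb{G},\,f^*\mathbb{G}=\Omega_f^2\mathbb{G})$ on $TS^2$ shows that $\Omega_f\,(v\circ f)$ is $\Delta_\mathbb{G}$-harmonic; applying it again to $(g,\,\phi^*\mathbb{G}=\Omega^2 g)$ on $\mathbb{R}^{2,2}$ shows that $w$ satisfies $\Delta_g w=0$.

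Once $w$ is a flat ultra-hyperbolic solution, the classical Asgeirsson theorem applied to the unit conjugate circles gives $\int_{S_0}w\,dl_g=\int_{S_0^\perp}w\,dl_g$, and combining this with the line-element identity of the first step yields the desired equality. The main obstacle is the second step: one must verify that the two conformal factors $\Omega$ and $\Omega_f\circ\phi$ combine in precisely the weight demanded by the covariance rule, and this hinges on the scalar flatness of $\mathbb{G}$, so that the curvature correction $L_\mathbb{G}-\Delta_\mathbb{G}$ vanishes identically. A secondary technicality is the merely local character of $\phi$, which one handles by choosing the chart adapted to the compact support of the configuration, or by a covering argument.
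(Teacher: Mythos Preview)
Your proposal is correct and follows essentially the same route as the paper. The paper first proves the flat analogue (for a conformal self-map of $\mathbb{R}^{2,2}$) by one application of Proposition~\ref{conformalsolutions} together with the change-of-variables identity $\int_{f(S_0)}u\,dl=\int_{S_0}\Omega\,(u\circ f)\,dl$, and then declares Theorem~\ref{t:4} to be the ``more geometric'' restatement via the conformal identification $\phi:\mathbb{R}^{2,2}\to TS^2$ already set up in Proposition~\ref{p:conf}; you instead compose $f$ and $\phi$ from the outset and apply the covariance rule twice, which simply makes that implicit second step explicit.
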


\vspace{0.1in}

Section \ref{conformalimage} shows that any pair of non-degenerate conjugate conics comes as $f(S_0), f(S_0^\perp)$ for some conformal map $f$. This completes the proof of Theorem \ref{t:1}. 

 Finally, there are two appendices, one dedicated to numerical checks of the extended Asgeirsson's theorem for a particular solution of the ultra-hyperbolic equation and for specific choices of conjugate conics, and one giving solutions of the ultra-hyperbolic equation generated by certain 3-dimensional step functions.

\vspace{0.1in}
\section{Conformal Extension of Asgeirsson's Theorem}\label{extended}

\subsection{Background}
The X-ray transform is defined by taking the integral of a scalar function over (affine) lines of ${\mathbb R}^n$. That is, given a real function $f:{\mathbb R}^n\rightarrow{\mathbb R}$ and a line $\gamma$ in ${\mathbb R}^n$, define the map 
\[
\chi_f(\gamma) = \int_\gamma fdt,
\]
where $dt$ is the flat line element in ${\mathbb R}^n$.

For functions $f$ with appropriate behaviour at infinity (for example, compactly supported), one obtains the {\em X-ray Transform of f}, namely $\chi_f:{\mathbb L}({\mathbb R}^n)\rightarrow{\mathbb R}:\gamma\mapsto \chi_f(\gamma)$, where ${\mathbb L}({\mathbb R}^n)$ is the space of lines in ${\mathbb R}^n$.

In comparison, the {\em Radon Transform} takes a function and integrates it over hypersurfaces in ${\mathbb R}^n$. By elementary considerations, the space of affine hypersurfaces in ${\mathbb R}^n$ is seen to be $n$-dimensional, the dimension of the underlying space, while the space of affine lines of ${\mathbb R}^n$ has dimension $2n-2$, which is bigger than the dimension of the underlying space for $n>2$. 

Thus, by dimension count, if we consider the problem of inverting the two transforms, given a function on hypersurfaces one can reconstruct the original function on ${\mathbb R}^n$, while, for $n>2$, the problem is over-determined for functions on lines.

In particular, for $n=3$, the space of lines is 4-dimensional and the ultra-hyperbolic equation is the condition that must be satisfied for a function on line space to come from an integral of a function on ${\mathbb R}^3$.  This was shown in 1938 when Fritz John \cite{fjohn} characterised solutions of equation (\ref{UHE})  with certain smoothness and decay conditions as the range of the X-ray transform. 

This impressive result has had a huge impact on fields such as tomography, where information of the interior of a 3D body is deduced from attenuation data of rays traversing the body. The ultra-hyperbolic equation, or an equivalent system of second order partial differential equations known as {\em John's equations}, represent a compatibility condition that tomographic data must satisfy, and so it has been exploited practically to obtain image reconstruction algorithms \cite{conebeam} \cite{fourierrebinning} \cite{defrisetliu} \cite{lichen}. 

Asgeirsson's theorem for the ultrahyperbolic equation has also seen applications in imaging and analysis of lightfields. T. Georgiev, H. Qin and H. Li recently used it to derive new kernels for processing lightfields which provide nice features like accurate depth estimation of planar images \cite{johntransform} \cite{lightfieldcoordinates}. 

More generally, one can consider geodesics on general Riemannian manifolds \cite{Uhlmann1} \cite{Uhlmann2} and an active line of work has been finding the range of the X-ray transform of rank $m$ symmetric tensor fields \cite{denisiuk} \cite{venkateswaran} \cite{nadirashvili}.  

The foundation of John's work lies in Asgeirsson's mean value theorem for solutions of second order linear partial differential equations with constant coefficients (see \cite{LA} and \cite{couranthilbert}). While Asgeirsson's original result holds for solutions of the  $(n,n)$-dimensional ultra-hyperbolic equation, John exploited the geometry of the $(2,2)$ case to obtain it in a more general form. 

The link between line space and the ultra-hyperbolic equation is made concrete by Pl\"ucker coordinates. That is, to describe a line in ${\mathbb R}^3$ John chooses two distinct points $(s_1,s_2,s_3)$ and $(t_1,t_2,t_3)$ on the line and forms the sextet $(p_1,p_2,p_3,q_1,q_2,q_3)$ by
\[
p_1=s_2t_3-t_2s_3 \qquad p_2=s_3t_1-t_3s_1 \qquad p_3=s_1t_2-t_1s_2
\]
\[
q_1=s_1-t_1 \qquad q_2=s_2-t_2 \qquad q_3=s_3-t_3.
\]
Flat coordinates on ${\mathbb R}^{2,2}$ are related to the Pl\"ucker coordinates away from $q_3=0$ by
\[
x_1=\frac{p_2+q_2}{q_3} \quad x_2=\frac{-p_1-q_1}{q_3} \quad x_3=\frac{p_2-q_2}{q_3} \quad x_4=\frac{-p_1+q_1}{q_3}.
\]

The freedom to choose different representative points on a given line means that the above description has a 2 dimensional redundancy in $(p_1,p_2,p_3,q_1,q_2,q_3)$. One dimension is generated by scaling, the other by the fact that the embedding satisfies the quadric equation
\[
p_1q_1+p_2q_2+p_3q_3=0.
\]
John remarks that the deeper connection between the ultra-hyperbolic equation and line space is given by this embedding of line space in the quadric hypersurface in projective 5-space. In what follows we will explore this further from the point of view of conformal differential geometry. 

\vspace{0.1in}
\subsection{Geometrization of line space}

To remove the extra degrees of freedom inherent in Pl\"ucker coordinates, proceed as in \cite{kahlermetric} and identify the space of oriented lines in ${\mathbb R}^3$ with $TS^2$, the total space of the tangent bundle of the 2-sphere. This double covers the space of unoriented lines, but this makes no difference in what follows as all our calculations are local.

Starting with the usual holomorphic coordinate $\xi$ on $S^2$ obtained by stereographic projection, construct complex coordinates $(\xi,\eta)$ on $TS^2$ by identifying 
\[
(\xi,\eta)\leftrightarrow \eta\frac{\partial}{\partial\xi}+\bar{\eta}\frac{\partial}{\partial\bar{\xi}}\in T_\xi S^2.
\]
Thus $\xi\in S^2$ is the direction of the oriented line and the complex number $\eta$ measures the displacement of the line from the origin.

\vspace{0.1in}
\begin{Thm}\cite{kahlermetric}\label{t:gk}
The space of oriented lines $TS^2$ admits a canonical metric ${\mathbb G}$ that is invariant under the Euclidean group acting on lines. The metric is of neutral signature (2,2), is conformally flat and scalar flat, but not Einstein. It can be supplemented by a complex structure ${\mathbb J}$ and symplectic structure $\omega$, so that $(TS^2,{\mathbb G},{\mathbb J},\omega)$ is a neutral K\"ahler 4-manifold.
\end{Thm}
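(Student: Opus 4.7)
The plan is to construct the metric explicitly in the complex coordinates $(\xi,\eta)$ on $TS^2$ and verify each claimed property by direct computation, using invariance under the Euclidean group as the guiding principle. First I would work out how the Euclidean group $E(3)$ acts on the space of oriented lines in these coordinates: rotations act on $\xi\in S^2$ as Möbius transformations of the form $\xi\mapsto(a\xi+b)/(-\bar b\,\xi+\bar a)$ with $|a|^2+|b|^2=1$, lifting to $TS^2$ by the natural differential, while a translation by $v\in{\mathbb R}^3$ fixes the direction $\xi$ and shifts $\eta$ by a quantity linear in $v$ whose $\xi$-dependence is dictated by moving the foot of perpendicular from the origin. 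Having pinned down these explicit transformation laws, one searches for a non-trivial symmetric $(0,2)$-tensor of type $(d\xi,d\eta)$ that is annihilated by the induced Lie algebra of infinitesimal generators.

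The natural candidate is of the form
\[
{\mathbb G}=\text{Im}\!\left(\frac{2\,d\eta\otimes_s d\bar\xi}{(1+\xi\bar\xi)^2}\right)+\lambda\,\frac{d\xi\otimes_s d\bar\xi}{(1+\xi\bar\xi)^2},
\]
and invariance under translations forces $\lambda=0$, isolating $\mathbb{G}$ uniquely up to scale. Writing out this tensor in the underlying real coordinates shows that its matrix has two positive and two negative eigenvalues, so the signature is $(2,2)$ and the metric is non-degenerate.

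The curvature analysis forms the computational core. Using the Koszul formula one obtains the Christoffel symbols of $\mathbb{G}$ in closed form; the Riemann tensor, its contractions, the scalar curvature $s$, and the Weyl tensor $W$ can then be assembled. One expects to verify $s=0$ and $W=0$, showing that $\mathbb{G}$ is both scalar flat and conformally flat, while observing that the traceless Ricci tensor is non-zero, showing that $\mathbb{G}$ is not Einstein. The main obstacle here is bookkeeping: the computation is in four real dimensions with an indefinite metric and complex coordinates, so one has to be careful when raising indices and when separating the self-dual from the anti-self-dual parts of the Weyl tensor.

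Finally, I would introduce the almost complex structure $\mathbb{J}$ coming from the natural complex structure on the total space of $TS^2$ (available because $S^2$ is a Riemann surface), and define the two-form $\omega:=\mathbb{G}(\mathbb{J}\,\cdot\,,\cdot)$. It then remains to check four things: $\mathbb{J}^2=-\text{Id}$, integrability via vanishing of the Nijenhuis tensor (automatic from the holomorphic coordinates), compatibility $\mathbb{G}(\mathbb{J}X,\mathbb{J}Y)=\mathbb{G}(X,Y)$, and closedness $d\omega=0$. Together these establish that $(TS^2,\mathbb{G},\mathbb{J},\omega)$ is a neutral Kähler 4-manifold, and invariance of all three structures under the Euclidean action is inherited from the construction of $\mathbb{G}$.
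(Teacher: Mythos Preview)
The paper does not prove this theorem at all: it is quoted verbatim from the cited reference \cite{kahlermetric}, and the only content the paper adds is the local coordinate expression~(\ref{e:metric}) for ${\mathbb G}$ immediately afterwards. So there is no ``paper's own proof'' to compare against; your outline is essentially a sketch of how the original reference proceeds.

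That said, your ansatz contains a concrete error that would derail the construction. You posit
\[
{\mathbb G}=\mathrm{Im}\!\left(\frac{2\,d\eta\, d\bar\xi}{(1+\xi\bar\xi)^2}\right)+\lambda\,\frac{d\xi\, d\bar\xi}{(1+\xi\bar\xi)^2}
\]
with $\lambda$ constant, and then claim translation invariance forces $\lambda=0$. But a Euclidean translation acts by $\eta\mapsto\eta+\tfrac12(a-2c\xi-\bar a\,\xi^2)$ for $a\in{\mathbb C}$, $c\in{\mathbb R}$, so $d\eta\mapsto d\eta-(c+\bar a\,\xi)\,d\xi$; the term $\mathrm{Im}(d\eta\,d\bar\xi)$ is therefore \emph{not} translation invariant on its own, and no constant multiple of $d\xi\,d\bar\xi$ can repair it. What invariance actually forces is an $\eta$-dependent $d\xi\,d\bar\xi$ term, and one recovers exactly the expression the paper records:
\[
{\mathbb G}=4(1+\xi\bar\xi)^{-2}\,\mathrm{Im}\!\left(d\bar\eta\,d\xi+\frac{2\bar\xi\eta}{1+\xi\bar\xi}\,d\xi\,d\bar\xi\right).
\]
With the correct metric in hand the rest of your plan (computing curvature to see $s=0$, $W=0$, $\mathrm{Ric}\neq0$; checking the K\"ahler conditions for the tautological ${\mathbb J}$) is the standard route and would go through.
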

\vspace{0.1in}
In terms of the local coordinates $(\xi,\eta)$ the neutral metric is
\begin{equation}\label{e:metric}
ds^2=4(1+\xi\bar{\xi})^{-2}{\mathbb{I}}\mbox{m}\left(d\bar{\eta} d\xi+\frac{2\bar{\xi}\eta}{1+\xi\bar{\xi}}d\xi d\bar{\xi}\right).
\end{equation}
Recall that a (pseudo-) Riemannian manifold $(M,{\mathbb G})$ is {\em conformally flat} if it can be covered by local coordinate systems $\{x_j\}_1^n$ such that the metric is $ds^2=\Omega^2\sum_j \pm dx^2_j$, $\Omega$ being a non-zero function defined on the coordinate neighbourhood.

The following result supplies local conformal coordinates for line space:
\vspace{0.1in}
\begin{Prop}\label{p:conf}
For complex coordinates $(\xi,\eta)$ on $TS^2$, over the upper hemisphere $|\xi|^2<1$ the conformal coordinates $(Z_1=x_1+ix_2,Z_2=x_3+ix_4)$ are
\begin{equation}\label{e:confco1}
Z_1=x_1+ix_2=\frac{2}{1-\xi^2\bar{\xi}^2}\left(\eta+\xi^2\bar{\eta}-i(1+\xi\bar{\xi})\xi\right),
\end{equation}
\begin{equation}\label{e:confco2}
Z_2=x_3+ix_4=\frac{2}{1-\xi^2\bar{\xi}^2}\left(\eta+\xi^2\bar{\eta}+i(1+\xi\bar{\xi})\xi\right),
\end{equation}
with inverse
\begin{equation}\label{e:confcoinv1}
    \xi=\frac{i(Z_1-Z_2)}{2-\sqrt{4+|Z_1-Z_2|^2}},
\end{equation}
\begin{equation}\label{e:confcoinv2}
    \eta=\frac{Z_1+Z_2}{2-\sqrt{4+|Z_1-Z_2|^2}}+\frac{(Z_1-Z_2)(|Z_1|^2-|Z_2|^2)}{2(2-\sqrt{4+|Z_1-Z_2|^2})^2}.
\end{equation}
\end{Prop}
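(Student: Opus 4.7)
The plan is to verify Proposition \ref{p:conf} by direct calculation in two stages, aided by the preliminary change of variables to the sum and difference $W=\tfrac{1}{2}(Z_{1}+Z_{2})$ and $V=\tfrac{1}{2}(Z_{1}-Z_{2})$. Straightforward manipulation of (\ref{e:confco1})--(\ref{e:confco2}) yields
\begin{equation*}
V=-\frac{2i\xi}{1-\xi\bar\xi},\qquad W=\frac{2(\eta+\xi^{2}\bar\eta)}{(1-\xi\bar\xi)(1+\xi\bar\xi)},
\end{equation*}
so that $V$ depends only on $\xi,\bar\xi$, while $W$ is affine in $\eta,\bar\eta$ with $\xi$-dependent coefficients. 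This separation of variables is what keeps both the inversion and the metric identification tractable.

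For the inversion, computing $|Z_{1}-Z_{2}|^{2}=4|V|^{2}=16\xi\bar\xi/(1-\xi\bar\xi)^{2}$ gives the clean identity $4+|Z_{1}-Z_{2}|^{2}=4(1+\xi\bar\xi)^{2}/(1-\xi\bar\xi)^{2}$. Selecting the branch of the square root appropriate to the hemisphere under consideration expresses $1-\xi\bar\xi$ as an explicit rational function of $Z_{1},Z_{2}$, from which $\xi$ is then read off from $V$, yielding (\ref{e:confcoinv1}). With $\xi$ determined, $W$ becomes a linear relation for $\eta$ and $\bar\eta$; pairing it with the conjugate relation $|Z_{1}|^{2}-|Z_{2}|^{2}=4\,\mathrm{Re}(\bar V W)$ gives a $2\times 2$ linear system whose solution produces (\ref{e:confcoinv2}).

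For the conformal property, use the algebraic identity $|dZ_{1}|^{2}-|dZ_{2}|^{2}=4\,\mathrm{Re}(dW\,d\bar V)$, which follows directly from $dZ_{1,2}=dW\pm dV$. Differentiating the expressions above yields
\begin{equation*}
dV=-\frac{2i}{(1-\xi\bar\xi)^{2}}\bigl(d\xi+\xi^{2}\,d\bar\xi\bigr),
\end{equation*}
together with an analogous rational expression for $dW$ in which the coefficient of $d\xi$ contains $\bar W$. Expanding $\mathrm{Re}(dW\,d\bar V)$, the mixed $d\eta\,d\bar\xi$ and $d\bar\eta\,d\xi$ contributions combine to reproduce the $\mathrm{Im}(d\bar\eta\,d\xi)$ piece of (\ref{e:metric}); the $d\xi\,d\bar\xi$ contributions arising from $W$'s dependence on $\xi,\bar\xi$ assemble into the remaining $\mathrm{Im}\bigl(2\bar\xi\eta(1+\xi\bar\xi)^{-1}d\xi\,d\bar\xi\bigr)$ piece; and all other monomials cancel in conjugate pairs under the real-part operation. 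The common scalar factor that is pulled out in this matching is the conformal factor $\Omega^{2}$, a rational function of $\xi,\bar\xi$ alone.

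The main obstacle is the combinatorial weight of expanding $dW\,d\bar V$ in the four independent differentials, which generates a dozen or so monomial terms that must be collected carefully and matched against (\ref{e:metric}). The $W/V$ decomposition is what makes this feasible: since $V$ is rational in $\xi$ alone, every term inherits a common factor of $(1-\xi\bar\xi)^{-2}$, and the surviving quadratic form has exactly the structure displayed in (\ref{e:metric}).
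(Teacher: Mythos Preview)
Your proposal is correct and proceeds, like the paper, by direct computation; the paper's own proof is a one-line instruction to pull back the metric (\ref{e:metric}) via (\ref{e:confcoinv1})--(\ref{e:confcoinv2}) and observe that the result is $ds^{2}=(1+\tfrac{1}{4}|Z_{1}-Z_{2}|^{2})^{-1}(dZ_{1}d\bar Z_{1}-dZ_{2}d\bar Z_{2})$, with no further detail. Your route differs in two useful ways. First, you push forward from $(\xi,\eta)$ rather than pull back, and you organise the calculation through the sum/difference variables $W,V$; since $V$ depends only on $\xi,\bar\xi$, this separates the $\xi$-inversion from the $\eta$-inversion and reduces the metric computation to the identity $|dZ_{1}|^{2}-|dZ_{2}|^{2}=4\,\mathrm{Re}(dW\,d\bar V)$, which makes the matching with (\ref{e:metric}) transparent. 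Second, you actually address the inversion formulas (\ref{e:confcoinv1})--(\ref{e:confcoinv2}), which the paper's proof does not touch; your observation that $4+|Z_{1}-Z_{2}|^{2}=4(1+\xi\bar\xi)^{2}/(1-\xi\bar\xi)^{2}$ is the key identity there, and your remark about selecting the branch of the square root is necessary and should be made explicit (on $|\xi|^{2}<1$ the quantity $2-\sqrt{4+|Z_{1}-Z_{2}|^{2}}$ is negative, so the sign bookkeeping deserves one line of care). In short, same method, but your $W/V$ decomposition gives a cleaner and more complete account than the paper's terse verification.
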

\begin{proof}
The result follows from pulling back the metric (\ref{e:metric}) by the transformation (\ref{e:confcoinv1}) and (\ref{e:confcoinv2}) and the result is
\[
ds^2=\frac{1}{1+{\textstyle{\frac{1}{4}}}|Z_1-Z_2|^2}\left(dZ_1d\bar{Z}_1-dZ_2d\bar{Z}_2\right).
\]
\end{proof}
\vspace{0.1in}
Note that these coordinates are only local and exclude lines whose direction is parallel to the $xy$-plane in ${\mathbb R}^3$ ($q_3=0$ or $|\xi|=1$). This comes from our original choice of holomorphic coordinate $\xi$ on $S^2$ via stereographic projection from the south pole. The whole of $TS^2$ can be covered by such coordinate patches, each of which is diffeomorphic to the product of an open hemisphere and ${\mathbb R}^2$. 

As John never refers to a metric on the space of lines, many of his computations involve the appearance of a mysterious factor - see e.g. equations (6) or (7), or Theorem 2.2 of \cite{fjohn}. 

We will now demonstrate that the origin of these factors is precisely the conformal factor in this canonical metric. First recall some elementary facts about conformal maps of scalar flat pseudo-Riemannian manifolds.

Let $(M,g^M)$ and $(N, g^N)$ be pseudo-Riemmanian manifolds of dimension $n$ with vanishing scalar curvatures. 

\vspace{0.1in}
\begin{Prop}\label{cfharmonic}
Let $f:(M,g^M)\rightarrow (N,g^N)$ be a conformal map with conformal factor $\Omega:M\rightarrow \mathbb R$, so that $f^*g^N=\Omega^2 g^M$.

Then $\Omega^{\frac{n-2}{2}}$ is a solution of Laplace's equation on $M$, that is
$$
\Delta_{g^M}\Omega^{\frac{n-2}{2}}:= g^{ij}\nabla_i\nabla_j \Omega^{\frac{n-2}{2}} =0,
$$ 
where $\nabla$ is the Levi-Civita connection associated with $g^M$.
\end{Prop}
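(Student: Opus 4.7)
The plan is to invoke the classical conformal transformation law for scalar curvature. Specifically, for any conformal rescaling $\tilde{g} = \phi^{4/(n-2)} g$ of a non-degenerate (pseudo-)Riemannian metric in dimension $n\geq 3$, one has the Yamabe-type identity
\[
\phi^{\frac{n+2}{n-2}} R_{\tilde g} \;=\; -\frac{4(n-1)}{n-2}\,\Delta_g \phi \;+\; R_g\,\phi,
\]
where $R$ denotes scalar curvature and $\Delta_g = g^{ij}\nabla_i\nabla_j$. This formula is a purely local tensorial identity obtained from the standard expansion of the Christoffel symbols and Ricci tensor under the rescaling; its derivation uses only the non-degeneracy of $g$, so it applies verbatim in arbitrary signature.

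To apply this to the situation at hand, I would set $\phi := \Omega^{(n-2)/2}$, so that $\phi^{4/(n-2)} = \Omega^{2}$ and the conformally rescaled metric $\phi^{4/(n-2)} g^M$ coincides with the pulled-back metric $f^{*} g^N$. Because scalar curvature is a diffeomorphism invariant, $R_{f^{*}g^N} = R_{g^N}\circ f$, which vanishes by the assumption that $(N,g^N)$ is scalar flat. Combined with the hypothesis $R_{g^M}=0$, the displayed identity collapses to
\[
0 \;=\; -\frac{4(n-1)}{n-2}\,\Delta_{g^M}\phi,
\]
from which $\Delta_{g^M}\Omega^{(n-2)/2}=0$ follows immediately.

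The only mildly delicate point is the justification of the conformal transformation formula in the pseudo-Riemannian setting, since it is most often stated for Riemannian metrics. Since it is a local tensorial identity whose derivation depends only on the non-degeneracy of $g$, no modification is needed; one can either cite a reference that develops the Yamabe equation in indefinite signature, or simply re-run the standard index computation. The proof therefore splits into three short steps: (i) record the conformal transformation formula for $R$ in the non-degenerate case, (ii) choose the exponent $\phi=\Omega^{(n-2)/2}$ so that $f^{*}g^N$ is the rescaled metric, and (iii) invoke scalar-flatness of both $g^M$ and $g^N$ to extract the conclusion.
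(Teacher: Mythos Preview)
Your proof is correct and follows essentially the same route as the paper: both invoke the conformal transformation law for scalar curvature (the Yamabe identity), set both scalar curvatures to zero, and read off $\Delta_{g^M}\Omega^{(n-2)/2}=0$. The paper states the formula in the equivalent form $\tilde R = \Omega^{-2}\bigl(R + \tfrac{4(n-1)}{n-2}\,\Omega^{-(n-2)/2}\Delta_{g^M}\Omega^{(n-2)/2}\bigr)$ and cites Aubin, but the content is identical to your three-step outline.
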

\begin{proof}
Denote the pulled back metric by $\tilde g^N=f^*g^N$ so that $\tilde g^N=\Omega^2 g^M$. 

The change of the scalar curvature on $M$ under a conformal change of the metric is given by the well-known relation \cite{aubin}

\begin{equation}\label{scalarconformal}
\tilde R = \Omega^{-2} \left(R + \frac{4(n-1)}{n-2}\Omega^{-\frac{n-2}{2}} \Delta_{g^M}\Omega^{\frac{n-2}{2}}\right).
\end{equation}

Since the scalar curvatures of both $g^M$ and $g^N$ are assumed to be zero, $R = \tilde R = 0$, we conclude that $\Delta_{g^M} \Omega^{\frac{n-2}{2}} = 0$. 

\end{proof}
\vspace{0.1in}

Conformal maps of scalar flat metrics also preserve solutions of the Laplace equation as follows:

\vspace{0.1in}
\begin{Prop}\label{conformalsolutions}
Let $(M,g^M)$ and $(N, g^N)$ be two scalar flat pseudo-Riemannian manifolds of dimension $n$ and $f:M \rightarrow N$ be a conformal map with conformal factor $\Omega$. 

Then, for any $u:N \rightarrow \mathbb R$, u is a solution of $\Delta_{g^N} u=0$ iff $v=\Omega^{\scriptstyle{\frac{n-2}{2}}}u\circ f$ is a solution of $\Delta_{g^M}v=0$.

\end{Prop}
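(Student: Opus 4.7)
The plan is to prove the equivalence via the conformal covariance of the Laplacian, which in the scalar-flat setting collapses into a clean intertwining of the two Laplacians. The key preliminary observation is that since $f^*g^N=\Omega^2 g^M$, the map $f$ is automatically a local isometry from $(M,\Omega^2 g^M)$ onto its image in $(N,g^N)$, so the equation $\Delta_{g^N} u=0$ pulls back to $\Delta_{\tilde g}(u\circ f)=0$ on $M$, where $\tilde g:=\Omega^2 g^M$. The task thus reduces to establishing, for every smooth $w$ on $M$, the intertwining identity
$$\Delta_{g^M}\bigl(\Omega^{(n-2)/2}\,w\bigr)=\Omega^{(n+2)/2}\,\Delta_{\tilde g} w,$$
and then specialising to $w=u\circ f$.

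First I would invoke the standard formula for the Laplacian under a conformal rescaling of the metric, which on $M$ reads
$$\Delta_{\tilde g} w=\Omega^{-2}\bigl(\Delta_{g^M} w+(n-2)\,\Omega^{-1}g^{ij}\nabla_i\Omega\,\nabla_j w\bigr),$$
where $g^{ij}$ is the inverse of $g^M$. Next I would expand $\Delta_{g^M}(\Omega^{(n-2)/2}w)$ by the Leibniz rule for the Laplacian, which yields three pieces: a term proportional to $\Delta_{g^M}(\Omega^{(n-2)/2})$, a cross-gradient term, and $\Omega^{(n-2)/2}\Delta_{g^M} w$.

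At this point Proposition \ref{cfharmonic} intervenes to eliminate the first piece, since $\Omega^{(n-2)/2}$ is $g^M$-harmonic precisely because $g^M$ and $g^N$ are scalar-flat (and hence $\tilde g$ is as well). The remaining two terms factor neatly as $\Omega^{(n+2)/2}$ times the bracketed expression appearing in the conformal rescaling formula above, delivering the desired intertwining. Substituting $w=u\circ f$ and using the isometry to identify $\Delta_{\tilde g}(u\circ f)=(\Delta_{g^N} u)\circ f$ then gives
$$\Delta_{g^M} v=\Omega^{(n+2)/2}\,(\Delta_{g^N} u)\circ f,$$
and since $\Omega$ is nowhere zero, the ``iff'' follows at once.

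The main obstacle is essentially algebraic bookkeeping: tracking powers of $\Omega$ and the coefficient $(n-2)$ so that the Leibniz expansion matches the conformal rescaling formula exactly. Conceptually, the proposition is merely the specialisation to scalar-flat metrics of the well-known conformal covariance $L_{\tilde g}w=\Omega^{-(n+2)/2}L_{g^M}(\Omega^{(n-2)/2}w)$ of the conformal Laplacian $L_g=\Delta_g-\tfrac{n-2}{4(n-1)}R_g$, so no conceptual difficulty beyond this is anticipated.
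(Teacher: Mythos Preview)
Your proposal is correct and follows essentially the same route as the paper: pull back along $f$ so that the problem becomes a conformal change $\tilde g=\Omega^2 g^M$ on $M$, invoke the standard conformal transformation formula for the Laplacian, and then use Proposition~\ref{cfharmonic} to kill the term involving $\Delta_{g^M}\Omega^{(n-2)/2}$. The only cosmetic difference is that the paper quotes the conformal change formula directly from \cite{aubin} in the already-packaged form $\Delta_{\tilde g}w=\Omega^{-(n+2)/2}\bigl(\Delta_{g^M}(\Omega^{(n-2)/2}w)-w\,\Delta_{g^M}\Omega^{(n-2)/2}\bigr)$, whereas you propose to rederive that identity via the Leibniz rule; the content is identical.
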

\begin{proof}
As before let $\tilde g^N$ be the pullback metric of $g^N$ under $f$. From the conformallity  of $f$ we have $\tilde g^N = \Omega^2 g^M$ and by the standard conformal change of the Laplace operator (see e.g. \cite{aubin})
\[
0 = \Delta_{\tilde{g}^N} u= \Omega^{-\frac{n+2}{2}}\left(
\Delta_{g^M} \Omega^{\frac{n-2}{2}} (u\circ f) - (u\circ f)\Delta_{g^M} \Omega^{\frac{n-2}{2}}
\right).
\]

By Proposition \ref{cfharmonic} the second term vanishes and so we are left with $\Delta_{g^M} \Omega^{\frac{n-2}{2}} (u\circ f) =0$.

\end{proof}
\vspace{0.1in}

Apply this to the neutral 4-manifold $(TS^2,{\mathbb G})$ of oriented lines in ${\mathbb R}^3$. 
By Theorem \ref{t:gk} and Proposition \ref{p:conf} the metric is locally conformal to the flat metric with conformal factor
\[
\Omega=\frac{1}{(1+{\textstyle{\frac{1}{4}}}|Z_1-Z_2|^2)^{\scriptstyle{\frac{1}{2}}}}=\frac{1+\xi\bar{\xi}}{1-\xi\bar{\xi}}.
\]
Denote this local conformal map by $f:{\mathbb R}^{2,2}\rightarrow TS^2$.

Applying the previous Proposition we have
\vspace{0.1in}
\begin{Cor}
Let $u:{\mathbb R}^{2,2}\rightarrow{\mathbb R}$ and $v:TS^2\rightarrow{\mathbb R}$ be related by $v\circ f=\Omega u$.

Then $u$ is a solution of the ultra-hyperbolic equation (\ref{UHE}) iff $\Delta_{\mathbb G} v=0$.
\end{Cor}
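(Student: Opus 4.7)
The plan is to obtain the corollary as an immediate specialization of Proposition \ref{conformalsolutions}. All the hypotheses are in place: the flat metric $g$ on $\mathbb R^{2,2}$ is trivially scalar flat, the canonical metric $\mathbb G$ on $TS^2$ is scalar flat by Theorem \ref{t:gk}, and the map $f:\mathbb R^{2,2}\to TS^2$ supplied by Proposition \ref{p:conf} is conformal with $f^*\mathbb G=\Omega^2 g$.

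First I would apply Proposition \ref{conformalsolutions} with source $(M,g^M)=(\mathbb R^{2,2},g)$ and target $(N,g^N)=(TS^2,\mathbb G)$. Since the ambient dimension is $n=4$, the universal exponent $(n-2)/2$ appearing in that proposition reduces to $1$. Letting the role of the function ``$u$'' in the proposition be played by our $v:TS^2\to\mathbb R$, the associated function on $\mathbb R^{2,2}$ is $\Omega\,(v\circ f)$, and the proposition asserts that $\Delta_{\mathbb G} v=0$ is equivalent to the vanishing of the flat Laplacian of $\Omega\,(v\circ f)$. Matching this with the normalization $v\circ f=\Omega u$ of the corollary, this $g$-harmonicity statement translates directly into a statement about $u$ on $\mathbb R^{2,2}$.

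To close the argument I only need to note that the Laplacian of the flat neutral metric $g=dx_1^2+dx_2^2-dx_3^2-dx_4^2$ is exactly the ultra-hyperbolic operator appearing on the left-hand side of (\ref{UHE}); this is immediate from the coordinate formula $\Delta_g=g^{ij}\partial_i\partial_j$ with $g^{ij}=\operatorname{diag}(1,1,-1,-1)$. So $\Delta_g u=0$ is literally equation (\ref{UHE}), and the biconditional of the corollary follows. There is no genuine analytic obstacle here, since Proposition \ref{conformalsolutions} has already absorbed all of the work; the only point that requires care is keeping the power of $\Omega$ consistent between the two framings, which is purely a notational matter.
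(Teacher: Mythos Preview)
Your approach is exactly the paper's: the corollary is obtained by a direct appeal to Proposition~\ref{conformalsolutions}, and the paper's entire proof is the single phrase ``Applying the previous Proposition we have''.

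One caveat worth flagging: your ``matching'' step does not literally close. Proposition~\ref{conformalsolutions} with $n=4$ gives $\Delta_{\mathbb G}v=0 \iff \Delta_g\bigl(\Omega\,(v\circ f)\bigr)=0$, and substituting the corollary's normalization $v\circ f=\Omega u$ yields $\Delta_g(\Omega^2 u)=0$, not $\Delta_g u=0$. This is not a flaw in your reasoning but an apparent typo in the corollary's statement: the paper's own direct Laplacian computation immediately afterwards shows $\Delta_{\mathbb G}v=\Omega^{-3}\Delta_g(\Omega\,(v\circ f))$, and the subsequent paragraph uses the relation $u=\Omega v$ (i.e.\ $u=\Omega\,(v\circ f)$), which is the inverse of what the corollary writes. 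So your instinct that ``the only point that requires care is keeping the power of $\Omega$ consistent'' is exactly right, but it is not purely notational---the stated normalization and the one actually delivered by the proposition differ by a factor of $\Omega^2$, and the correct relation is $u=\Omega\,(v\circ f)$.
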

\vspace{0.1in}

In fact this can be seen directly by writing the Laplacian acting on functions $v:TS^2\rightarrow{\mathbb R}$ in the two sets of coordinates:
\begin{align}
    \Delta_{\mathbb G}v=&g^{jk}\nabla_j\nabla_k v \nonumber\\
    &=i(1+\xi\bar{\xi})^2\left(\frac{\partial^2}{\partial\xi\partial\bar{\eta}}-\frac{\partial^2}{\partial\bar{\xi}\partial\eta}-\frac{2(\xi\bar{\eta}-\bar{\xi}\eta)}{1+\xi\bar{\xi}}\frac{\partial^2}{\partial\eta\partial\bar{\eta}}\right)v\nonumber\\
    &=(1+{\textstyle{\frac{1}{4}}}|Z_1-Z_2|^2)^{\scriptstyle{\frac{3}{2}}}\left(\frac{\partial^2}{\partial Z_1 \partial\bar{Z}_1}-\frac{\partial^2}{\partial Z_2\partial\bar{Z}_2}\right)\frac{v}{(1+{\textstyle{\frac{1}{4}}}|Z_1-Z_2|^2)^{\scriptstyle{\frac{1}{2}}}}.\nonumber
\end{align}

\vspace{0.1in}
Note that the conformal factor $\Omega$ is precisely the mysterious factor on the right hand side of equation (6) of John's paper \cite{fjohn}.  

We can now see that Theorem \ref{t:2} follows from Theorem \ref{t:1} with $u=\Omega v$ as
\[
\int_Sudl=\int_S\Omega v dl=\int_S vd\tau,
\]
the line elements of ${\mathbb G}$ and $g$ being related by $d\tau=\Omega dl$. The conformal factor in the line element appears in Theorem 2.2 of \cite{fjohn} in an ad hoc manner as the cosine of the angle formed by the ruling of the general 1-sheeted hyperboloid with its axis. Here we gain an insight into its geometric significance as the line element of the neutral metric.

\vspace{0.1in}
\subsection{Conformal planes}\label{s:2.3}

This section explores conjugate planes and their geometry in line space. In particular, we have defined conjugate planes $\pi,\pi^\perp$ in ${\mathbb R}^{2,2}$, and want to consider their image in oriented line space under the conformal maps (\ref{e:confco1}) and (\ref{e:confco2}). 

\vspace{0.1in}
\begin{Def}
A  {\it conformal plane} is a surface $\pi\subset TS^2$ given by a linear equation in conformal coordinates (\ref{e:confco1}) and (\ref{e:confco2}):
\begin{equation}\label{e:linplane}
\alpha_1Z_1+\beta_1\bar{Z}_1+\alpha_2Z_2+\beta_2\bar{Z}_2=\gamma.
\end{equation}
for $\alpha_1,\alpha_2,\beta_1,\beta_2,\gamma\in{\mathbb C}$.
\end{Def}
\vspace{0.1in}

For brevity we denote both the plane in ${\mathbb R}^{2,2}$ and its image in $TS^2$ by $\pi$.  By a translation in ${\mathbb R}^{2,2}$ (which corresponds to a translation and rotation in ${\mathbb R}^3$) set $\gamma$ to zero. Thus $\pi$ contains the vertical line through the origin in ${\mathbb R}^3$ $(\xi,\eta)=(0,0)$.

A conformal plane can be one of two types. Consider the canonical projection $TS^2\rightarrow S^2$ which takes an oriented line to its direction, in coordinates $(\xi,\eta)\mapsto \xi$. 

\vspace{0.1in}
\begin{Def}
A  conformal plane $\pi\subset TS^2$ is {\em graphical} if its projection onto $S^2$ has maximal rank (real dimension two). A  conformal plane is {\em non-graphical} if its projection onto $S^2$ has rank one or zero.   
\end{Def}
\vspace{0.1in}
In the case of rank zero, the conformal plane is the whole fibre of the bundle, which is totally null, and since such a plane is self-orthogonal, in what follows we will ignore this case.
\vspace{0.1in}

\begin{Prop}
A conformal plane is graphical iff $|\alpha_1+\alpha_2|^2-|\beta_1+\beta_2|^2\neq0$.

A graphical conformal plane through the origin is given in graphical coordinates $(\xi,\eta=F(\xi,\bar{\xi}))$ for $|\xi|<1$ by
\begin{equation}\label{e:sect}
\eta=\frac{1}{1-\xi\bar{\xi}}\left(\alpha\xi+\beta\bar{\xi}-\bar{\alpha}\xi^2\bar{\xi}-\bar{\beta}\xi^3\right),
\end{equation}
where,
\[
\alpha=\frac{i(-(\alpha_1-\alpha_2)(\bar{\alpha}_1+\bar{\alpha}_2)+(\beta_1+\beta_2)(\bar{\beta}_1-\bar{\beta}_2)}{-|\alpha_1+\alpha_2|^2+|\beta_1+\beta_2|^2},
\]
and
\[
\beta=-\frac{2i(\bar{\alpha}_1\beta_2-\bar{\alpha}_2\beta_1)}{-|\alpha_1+\alpha_2|^2+|\beta_1+\beta_2|^2}.
\]
A non-graphical conformal plane through the origin is given in coordinates $(u,v)$  by 
\begin{equation}\label{e:nongcp}
\xi=uie^{{\scriptstyle{\frac{i}{2}}}(\theta+\phi)} \qquad\qquad
\eta=\frac{2Hiv(1-u^2e^{2i\theta})}{1-u^4}e^{-{\scriptstyle{\frac{i}{2}}}(\theta-\phi)}
\end{equation}
where $(u,v)$ are parameters on the conformal plane with $u\in[0,1),v\in{\mathbb R}$ and $\theta,\phi\in [0,\pi]$, $H\in{\mathbb R}_+$ are such that
\[
\alpha_1=1 \qquad \beta_1=0 \qquad \alpha_2=He^{i\theta}-1 \qquad \beta_2=He^{i\phi}.
\]
\end{Prop}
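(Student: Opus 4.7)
The plan is to substitute the conformal-coordinate expressions (\ref{e:confco1}) and (\ref{e:confco2}) into the defining equation (\ref{e:linplane}) with $\gamma=0$, and to recast it as a linear system for $\eta$ in terms of $\xi$. I would use the identities
\[
Z_1+Z_2=\frac{4(\eta+\xi^2\bar\eta)}{(1-\xi\bar\xi)(1+\xi\bar\xi)}, \qquad Z_1-Z_2=\frac{-4i\xi}{1-\xi\bar\xi}
\]
together with their complex conjugates, rewrite $\alpha_1Z_1+\alpha_2Z_2=\tfrac{1}{2}(\alpha_1+\alpha_2)(Z_1+Z_2)+\tfrac{1}{2}(\alpha_1-\alpha_2)(Z_1-Z_2)$, and clear the common factor $\tfrac{1}{2}(1-\xi^2\bar\xi^2)$. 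The equation reduces to the single complex relation
\[
\eta(A+B\bar\xi^2)+\bar\eta(A\xi^2+B)=(C\xi+D\bar\xi)(1+\xi\bar\xi),
\]
where $A=\alpha_1+\alpha_2$, $B=\beta_1+\beta_2$, $C=i(\alpha_1-\alpha_2)$, $D=-i(\beta_1-\beta_2)$. Pairing this with its complex conjugate yields a $2\times 2$ linear system in $(\eta,\bar\eta)$ whose determinant expands to $(|A|^2-|B|^2)(1-\xi^2\bar\xi^2)$. Since $|\xi|<1$ in the chart, the system is non-singular precisely when $|\alpha_1+\alpha_2|^2\neq|\beta_1+\beta_2|^2$, giving the graphical criterion.

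For the graphical case I would apply Cramer's rule: the factor $(1+\xi\bar\xi)$ on the right-hand side cancels one factor of $(1-\xi^2\bar\xi^2)$ in the denominator, yielding
\[
\eta=\frac{(C\bar A-\bar DB)\xi+(D\bar A-\bar CB)\bar\xi+(D\bar B-\bar CA)\xi^2\bar\xi+(C\bar B-\bar DA)\xi^3}{(|A|^2-|B|^2)(1-\xi\bar\xi)}.
\]
Setting $\alpha=(C\bar A-\bar DB)/(|A|^2-|B|^2)$ and $\beta=(D\bar A-\bar CB)/(|A|^2-|B|^2)$, the remaining two cofactors are exactly $-\bar\alpha$ and $-\bar\beta$, which produces the form (\ref{e:sect}). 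Substituting back $A,B,C,D$ and invoking the identity $(\bar\alpha_1-\bar\alpha_2)(\beta_1+\beta_2)-(\beta_1-\beta_2)(\bar\alpha_1+\bar\alpha_2)=2(\bar\alpha_1\beta_2-\bar\alpha_2\beta_1)$ then yields the stated closed forms for $\alpha$ and $\beta$.

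For the non-graphical case $|A|=|B|$, the rows of the degenerate system are proportional, so solutions exist only along a constrained one-real-dimensional locus of $\xi$ and are determined modulo a one-parameter null direction in $\eta$. I would first use the scaling symmetry and the phase rotations in the $(x_1,x_2)$- and $(x_3,x_4)$-planes, which preserve (\ref{e:linplane}) as an equation for the plane, to reduce to the normal form $\alpha_1=1$, $\beta_1=0$, $1+\alpha_2=He^{i\theta}$, $\beta_2=He^{i\phi}$ with $H>0$ and $\theta,\phi\in[0,\pi]$. Proportionality of the two rows, computed in this normal form, collapses to $e^{-i(\theta+\phi)}\xi+\bar\xi=0$, equivalently $\xi=uie^{i(\theta+\phi)/2}$ with $u\in[0,1)$; along this ray the equation leaves $\eta$ free in the one-dimensional null direction of the degenerate matrix, and a parametrisation of this null line by the real variable $v$ produces (\ref{e:nongcp}). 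The main obstacle throughout is the algebraic bookkeeping: in the graphical case, matching the Cramer numerator to the $(\alpha,\beta,-\bar\alpha,-\bar\beta)$ pattern of (\ref{e:sect}) requires careful use of the conjugate-cofactor pairings, while in the non-graphical case, extracting both the $\xi$-constraint and the precise form of the null direction in $\eta$, and then matching the latter to the coefficients in (\ref{e:nongcp}), is the delicate step.
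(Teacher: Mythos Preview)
Your approach is essentially the same as the paper's, which simply says ``substitute (\ref{e:confco1}) and (\ref{e:confco2}) into (\ref{e:linplane}) with $\gamma=0$ and rearrange''; you have spelled out that rearrangement explicitly via the $2\times2$ linear system in $(\eta,\bar\eta)$ and Cramer's rule, and your determinant computation $(|A|^2-|B|^2)(1-\xi^2\bar\xi^2)$ is correct and gives the graphical criterion cleanly.

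There is one imprecision in your non-graphical normalisation. You appeal to ``phase rotations in the $(x_1,x_2)$- and $(x_3,x_4)$-planes'' to reduce to $\alpha_1=1$, $\beta_1=0$, but such rotations are isometries of ${\mathbb R}^{2,2}$ that move the plane $\pi$ to a congruent but \emph{different} plane; they do not merely change the representation. The paper's justification is different and is what you actually need: the single complex equation (\ref{e:linplane}) and its complex conjugate together cut out the same real $2$-plane, so any replacement $L\mapsto cL+d\bar L$ with $|c|\neq|d|$ yields another equation of the same form for the \emph{same} plane. This two-complex-parameter redundancy is what lets you normalise two complex coefficients (here $\alpha_1\to1$, $\beta_1\to0$) without altering $\pi$. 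With that correction your argument goes through and matches the paper's in substance, only with considerably more detail supplied.
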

\begin{proof}
Equation (\ref{e:sect}) follows from rearranging the result of substituting equations (\ref{e:confco1}) and (\ref{e:confco2}) in equation (\ref{e:linplane}) with $\gamma=0$. Clearly the plane is graphical iff $|\alpha_1+\alpha_2|^2-|\beta_1+\beta_2|^2\neq0$.

For non-graphical planes, start with $|\alpha_1+\alpha_2|^2-|\beta_1+\beta_2|^2=0$, from which conclude the existence of $(\theta,\phi,H)\in[0,2\pi]\times[0,2\pi]\times{\mathbb R}$ such that
\[
\alpha_2=He^{i\theta}-\alpha_1 \qquad\qquad \beta_2=He^{i\phi}-\beta_1. 
\]
In fact, by combining equation (\ref{e:linplane}) and its complex conjugate, we can remove some redundancy in the description by setting $\alpha_1=1$ and $\beta_1=0$. Thus a non-graphical conformal plane is determined by the three real constants $(\theta,\phi,H)$.

Substituting equations (\ref{e:confco1}) and (\ref{e:confco2}) in equation (\ref{e:linplane}) with $\gamma=0$ and these simplifications yields a pair of real equations with solutions  (\ref{e:nongcp}).
\end{proof}
\vspace{0.1in}
This means in particular that a conformal plane is a section over a hemisphere and that it goes out to infinity along the fibre above the equator. 

\vspace{0.1in}
\begin{Prop}
The metric induced on a conformal plane has fixed signature throughout. For a graphical conformal plane the metric can be definite, indefinite or degenerate, while on a non-graphical plane  the induced metric can only be indefinite or degenerate.
\end{Prop}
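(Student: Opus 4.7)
The plan is to reduce everything to linear algebra in flat ${\mathbb R}^{2,2}$ via the conformal identification of Proposition \ref{p:conf}. A conformal plane $\pi\subset TS^2$ is, by definition, given by the linear equation (\ref{e:linplane}) in conformal coordinates, hence is the image under the conformal map of a genuine affine $2$-plane $\tilde\pi\subset{\mathbb R}^{2,2}$. Since the metric on $TS^2$ in these coordinates has the form $\Omega^2(dZ_1d\bar Z_1-dZ_2d\bar Z_2)$ with $\Omega^2>0$, the metric induced on $\pi$ equals, pointwise, $\Omega^2$ times the restriction of the flat neutral form to $\tilde\pi$. But that restriction is a constant symmetric bilinear form on the linear subspace $\tilde\pi$, so its signature does not depend on the point; multiplication by the positive function $\Omega^2$ preserves signature. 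This handles the fixed-signature claim.

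For the graphical half I would exhibit examples realizing each of the three signatures and check in each case that the defining coefficients satisfy $|\alpha_1+\alpha_2|^2-|\beta_1+\beta_2|^2\neq 0$. Natural candidates are $\{x_3=x_4=0\}$ (positive-definite, $Z_2=0$), $\{x_1=x_2=0\}$ (negative-definite, $Z_1=0$), $\{x_1=x_4=0\}$ (indefinite, induced metric $dx_2^2-dx_3^2$), and $\{x_3=0,\,x_4=x_1\}$ (semi-degenerate, induced metric $dx_2^2$). For the latter two, a short computation rewrites the pair of real constraints as a single complex equation in the form (\ref{e:linplane}) with coefficients passing the graphicality test.

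For the non-graphical half, the essential geometric fact is that the fibres of the projection $TS^2\to S^2$ are totally null: equations (\ref{e:confco1})--(\ref{e:confco2}) show that varying $\eta$ with $\xi$ fixed yields $dZ_1=dZ_2$, on which the flat neutral form vanishes identically. If $\pi$ is non-graphical, its projection onto $S^2$ has rank at most one (and, by assumption, at least one), so at every point of $\pi$ the tangent space contains a nonzero vertical, hence null, tangent vector. A positive- or negative-definite bilinear form admits no null vectors, so the induced metric on $\pi$ must be either indefinite or degenerate. Concrete realizations of both cases, e.g.\ $\{x_2=x_4=0\}$ for indefinite and $\{x_3=0,\,x_4=x_2\}$ for degenerate, complete the existence statement. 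The only non-trivial step is the identification of the vertical distribution with the null direction $dZ_1=dZ_2$; everything else is routine bookkeeping with the complex equation (\ref{e:linplane}) and the flat line element in conformal coordinates.
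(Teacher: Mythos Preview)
Your argument is correct and takes a genuinely different route from the paper. The paper pulls back ${\mathbb G}$ to the graphical section (\ref{e:sect}) and to the non-graphical parametrization (\ref{e:nongcp}), computes the determinant of the induced metric explicitly in each case, and reads off the sign (obtaining along the way the precise criteria $\mathrm{Im}(\alpha)^2\gtrless|\beta|^2$ and $\theta\neq 0,\pi$). You instead use the conformal identification of Proposition~\ref{p:conf} to transport the question back to a linear $2$-plane in flat ${\mathbb R}^{2,2}$, where the induced bilinear form is literally constant; positivity of $\Omega^2$ then fixes the signature on $\pi\subset TS^2$ for free. Your treatment of the non-graphical case is the real payoff: the observation that $Z_1-Z_2=-4i\xi/(1-\xi\bar\xi)$ depends only on $\xi$ makes the fibres of $TS^2\to S^2$ null in the flat coordinates, so any plane whose projection drops rank contains a null tangent direction and cannot be definite. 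This is more conceptual than the paper's determinant calculation and explains \emph{why} definiteness is obstructed, whereas the paper's approach has the compensating advantage of producing explicit discriminants that are used later in the doubly ruled surface analysis.

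One small point worth making explicit in your write-up: you use that the projection rank is constant along a conformal plane (so that ``non-graphical'' really forces a vertical direction at \emph{every} point). This follows because the projection factors through the affine map $(Z_1,Z_2)\mapsto Z_1-Z_2$ restricted to the linear plane $\tilde\pi$, which has constant rank; the paper assumes this implicitly in its definition but does not state it.
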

\begin{proof}
Pulling back the metric (\ref{e:metric}) to the section (\ref{e:sect}) and taking the determinant yields
\[
{{Det}}\;{\mathbb G}|_\pi=\left(-(\alpha-\bar{\alpha})^2-4\beta\bar{\beta}\right)\frac{(1+\xi\bar{\xi})^2}{(1-\xi\bar{\xi})^2}.
\]
Thus the signature is fixed on graphical planes and one can get definite, indefinite or degenerate metrics according to whether $Im(\alpha)^2>|\beta|^2$, $Im(\alpha)^2<|\beta|^2$ or $Im(\alpha)^2=|\beta|^2$, respectively.

A similar calculation for non-graphical planes utilizing equation (\ref{e:nongcp}) leads to
\[
{{Det}}\;{\mathbb G}|_\pi=\frac{16H^2(e^{i\theta}-e^{-i\theta})^2}{(1+u^2)^2(1-u^4)^2}.
\]
The determinant is  zero or negative and so the metric is degenerate or indefinite according to whether $\theta\neq0,\pi$ or $\theta=0,\pi$, respectively.
\end{proof}
\vspace{0.1in}

Graphical indefinite conformal planes in $TS^2$ are totally real surfaces, having no complex points.  Here, complex is with respect to the canonical complex structure ${\mathbb J}$ on $TS^2$ \cite{kahlermetric}.  In fact, as there are no complex points at infinity ($|\xi|=1$) either, these twisting line congruences are examples of complete strictly totally real sections of $TS^2\rightarrow S^2$ that, were they Lagrangian with respect to the canonical symplectic structure $\omega$, could not exist by a conjecture of Toponogov \cite{gak2} \cite{Top}.  

Being symplectic, the 2-parameter family of lines in ${\mathbb R}^3$ defined by a graphical definite conformal plane in $TS^2$ twist around each other in such a way as to foliate ${\mathbb R}^3$. Conversely, foliations of ${\mathbb R}^3$ by lines are exactly the definite surfaces over a hemisphere in $TS^2$ \cite{salvai}.

Given a non-degenerate plane $\pi$ in ${\mathbb R}^{2,2}$, we defined its conjugate plane $\pi^\perp$ to be the plane orthogonal to $\pi$ with respect to the flat neutral metric. This definition of orthogonality is conformally invariant and thus, given a conformal plane in $TS^2$ we can define its conjugate conformal plane.
\vspace{0.1in}
\begin{Prop}
Two conformal planes are conjugate iff their tangent planes at the point of intersection are orthogonal with respect to ${\mathbb G}$. 

Two graphical conformal planes determined by equation (\ref{e:sect}) with $(\alpha,\beta)$ and $(\tilde{\alpha},\tilde{\beta})$ are conjugate iff $\tilde{\alpha}=\bar{\alpha}$ and $\tilde{\beta}=-\beta$. 

Two non-graphical conformal plane determined by equations (\ref{e:nongcp}) with $(\theta,\phi,H)$ and $(\tilde{\theta},\tilde{\phi},\tilde{H})$ are conjugate iff
\[
\tilde{\theta}=-\theta \qquad \tilde{\phi}=\phi \qquad \tilde{H}=-\frac{H}{1-2H\cos\theta}.
\]
\end{Prop}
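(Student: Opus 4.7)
The plan is to prove the three claims in sequence, using (1) as the conceptual bridge.

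For claim (1), apply conformality. By the definition preceding the proposition, two conformal planes are conjugate iff the corresponding planes $\pi,\pi^\perp\subset\mathbb{R}^{2,2}$ are $g$-orthogonal. Since these planes pass through the origin $O$ and are linear subspaces, each coincides with its tangent plane at $O$; so $g$-orthogonality of the planes is the same as $g$-orthogonality of their tangent spaces at $O$. The differential $df|_O$ of the conformal map $f:\mathbb{R}^{2,2}\to TS^2$ carries these to the tangent planes of the conformal planes at $f(O)$, and being conformal preserves orthogonality, giving the claim.

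For claim (2), I use the graphical parametrization (\ref{e:sect}). Setting $\xi = u+iv$, the linearization $\eta \approx \alpha\xi+\beta\bar\xi$ yields explicit tangent basis vectors $\partial_u|_0, \partial_v|_0$ in terms of $(\alpha,\beta)$. At the origin the metric (\ref{e:metric}) collapses to $\mathbb{G}|_0 = 4\,\mathrm{Im}(d\bar\eta\,d\xi)$. The four pairwise orthogonality conditions between tangent vectors of the two planes yield four real linear equations in $\mathrm{Re}\,\alpha,\mathrm{Im}\,\alpha,\mathrm{Re}\,\beta,\mathrm{Im}\,\beta$ and their tildes, whose unique simultaneous solution is $\tilde\alpha = \bar\alpha$ and $\tilde\beta = -\beta$.

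For claim (3), I work directly with the defining linear equations in the conformal coordinates on $\mathbb{R}^{2,2}$. Starting from $\pi: Z_1 + (He^{i\theta}-1)Z_2 + He^{i\phi}\bar Z_2 = 0$, split into real and imaginary parts and raise indices with $g$; the resulting two vectors span $\pi^\perp$. To put the equation of $\pi^\perp$ into the canonical form $\tilde\alpha_1 = 1,\tilde\beta_1 = 0$, take a complex linear combination of that equation and its conjugate: this is a $2\times 2$ complex system whose determinant is $|\alpha_2|^2-|\beta_2|^2 = 1-2H\cos\theta$. Solving gives $\tilde\alpha_2 = (He^{i\theta}-1)/(1-2H\cos\theta)$ and $\tilde\beta_2 = -He^{i\phi}/(1-2H\cos\theta)$, which match $\tilde H e^{i\tilde\theta} - 1$ and $\tilde H e^{i\tilde\phi}$ for exactly $\tilde\theta = -\theta,\ \tilde\phi = \phi,\ \tilde H = -H/(1-2H\cos\theta)$.

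The main obstacle is claim (3): unlike the graphical case, a direct tangent-plane argument at the origin using (\ref{e:nongcp}) does not determine $\tilde H$, because the push-forward span at $(u,v)=(0,0)$ encodes only the angular data $(\theta,\phi)$ and not $H$. The $H$-dependence must instead be recovered through the $\mathbb{R}^{2,2}$ normalization step, and the denominator $1 - 2H\cos\theta$ emerges naturally there as the determinant of the $2\times 2$ normalization system, whose non-vanishing is precisely the condition that $\pi^\perp$ remain non-graphical.
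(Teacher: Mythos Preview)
Your argument is correct, but it differs from the paper's in two noteworthy ways.

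For claim (1), the paper proceeds indirectly: it first derives the parameter relations $\tilde\alpha=\bar\alpha,\ \tilde\beta=-\beta$ (and the non-graphical analogues) by imposing the $\mathbb{R}^{2,2}$ orthogonality condition $Z_1\bar{\tilde Z}_1-Z_2\bar{\tilde Z}_2=0$ on the full parametrizations, and then \emph{separately} computes the tangent spaces at the origin in $TS^2$ and checks that $\mathbb{G}$-orthogonality yields the same relations; equivalence of the two characterizations then gives (1). Your one-line conformality argument---$df_O$ is a linear conformal isomorphism, so it preserves orthogonality of tangent $2$-planes---is cleaner and more conceptual, and avoids this duplication.

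For claim (3), the paper substitutes the explicit parametrization (\ref{e:nongcp}) together with (\ref{e:confco1}) into the orthogonality condition and extracts the relations on $(\theta,\phi,H)$. Your route stays entirely in $\mathbb{R}^{2,2}$: from the defining equation $Z_1+(He^{i\theta}-1)Z_2+He^{i\phi}\bar Z_2=0$ you pass to the $g$-orthogonal complement and renormalize via a $2\times2$ system, whose determinant $|\alpha_2|^2-|\beta_2|^2=1-2H\cos\theta$ is exactly the denominator in the stated $\tilde H$. This is more transparent about the origin of that factor and sidesteps the heavier coordinate substitution.

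One caveat: your motivating remark that the push-forward tangent plane at $(u,v)=(0,0)$ ``encodes only the angular data $(\theta,\phi)$ and not $H$'' is suspect. Since $df_O$ is a linear isomorphism and distinct $H$ give distinct linear planes in $\mathbb{R}^{2,2}$, the tangent planes in $TS^2$ \emph{must} differ. (The apparent $H$-independence comes from an artefact of the particular $(u,v)$-basis in (\ref{e:nongcp}); the full span does see $H$.) This does not affect your proof, since you establish (3) by the $\mathbb{R}^{2,2}$ computation rather than via tangent planes, but the remark should be dropped or corrected.
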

\begin{proof}
A pair of planes $\pi,\tilde{\pi}$ in ${\mathbb R}^{2,2}$ intersecting at the origin are conjugate iff
\[
Z_{1}\bar{\tilde{Z}}_{1}-Z_{2}\bar{\tilde{Z}}_{2}=0,
\]
for all points with complex coordinates $(Z_{1},Z_{2})$ in $\pi$ and $(\tilde{Z}_{1},\tilde{Z}_{2})$ in $\tilde{\pi}$. 
If the plane is graphical, substituting equations (\ref{e:confco1}) and (\ref{e:sect}) in the last equation with the two sets of coefficients $(\alpha,\beta)$ and $(\tilde{\alpha},\tilde{\beta})$, one finds that $\tilde{\alpha}=\bar{\alpha}$ and $\tilde{\beta}=-\beta$. Similarly, substituting equations (\ref{e:confco1}) and (\ref{e:nongcp}), we get he stated relation for non-graphical planes. 

Consider the associated conformal planes in $TS^2$. When a conformal plane is graphical it is given by equation (\ref{e:sect}), and the tangent space at the origin is spanned by vectors of the form
\begin{align}
X&=A\left.\left(\frac{\partial}{\partial\xi}+\frac{\partial\eta}{\partial\xi}\frac{\partial}{\partial\eta}+\frac{\partial\bar{\eta}}{\partial\xi}\frac{\partial}{\partial\bar{\eta}}\right)\right|_0
+\bar{A}\left.\left(\frac{\partial}{\partial\bar{\xi}}+\frac{\partial{\eta}}{\partial\bar{\xi}}\frac{\partial}{\partial\eta}+\frac{\partial\bar{\eta}}{\partial\bar{\xi}}\frac{\partial}{\partial\bar{\eta}}\right)\right|_0\nonumber\\
\qquad &=A\left.\left(\frac{\partial}{\partial\xi}+\alpha\frac{\partial}{\partial\eta}+\bar{\beta}\frac{\partial}{\partial\bar{\eta}}\right)\right|_0
+\bar{A}\left.\left(\frac{\partial}{\partial\bar{\xi}}+\beta\frac{\partial}{\partial\eta}+\bar{\alpha}\frac{\partial}{\partial\bar{\eta}}\right)\right|_0,\nonumber
\end{align}
for $A\in{\mathbb C}$. A short computation shows that the tangent spaces of two graphical planes intersecting at the origin are orthogonal, ${\mathbb G}(X,\tilde{X})|_0=0$ for all $X\in T_0\pi$, $\tilde{X}\in T_0\tilde{\pi}$, iff $\tilde{\alpha}=\bar{\alpha}$ and $\tilde{\beta}=-\beta$. 

A similar calculation for non-graphical planes yields the stated result.

\end{proof}
\vspace{0.1in}
\subsection{Doubly ruled surfaces}

In \cite{fjohn} John identified points in ${\mathbb R}^{2,2}$ with lines in ${\mathbb R}^3$ and proved that the conjugate circles in Asgeirsson's theorem are the two rulings of the 1-sheeted hyperboloid of revolution. He then showed that maps in line space which come from affine transformations of $\mathbb R^3$ preserve solutions of the ultra-hyperbolic equation (\ref{UHE}). With this he extended the original Asgeirsson's theorem on the two families of generating lines of a hyperboloid of revolution to a mean value theorem on the rulings of a general $1-$sheeted hyperboloid. 

Given this relationship between the original Asgeirsson's Theorem and the 1-sheeted hyperboloid, is there a similar relationship between the extended version and the other doubly ruled surface, the hyperbolic paraboloid? 

The answer is yes, as we now prove.

\vspace{0.1in}
\noindent{\bf Theorem \ref{t:3}}

{\it
A pair of non-degenerate conjugate conics can be identified with a pair of rulings of the same surface in ${\mathbb{R}}^3$.

In the definite case, they are the two rulings of a 1-sheeted hyperboloid, in the hyperbolic case, they are either parts of the two rulings of a 1-sheeted hyperboloid or the two rulings of a hyperbolic paraboloid. 
}
\begin{proof}
The case of definite  (and hence graphical) planes is originally John's result - see Theorem 2.1 of \cite{fjohn}. 

The link between $TS^2$ and ${\mathbb R}^3$ is given by the map $\Phi:TS^2\times{\mathbb R}\rightarrow{\mathbb R}^3$ which takes an oriented line and a number, to the point in ${\mathbb R}^3$ on the line that lies the given distance from the point on the line closest to the origin. 

In coordinates $(\xi,\eta)$ on $TS^2$ and $(X_1,X_2,X_3)$ on ${\mathbb R}^3$, $\Phi((\xi,\eta),r)$ can be written explicitly
\begin{equation}\label{e:minit}
X_1+iX_2=\frac{2(\eta-\xi^2\bar{\eta})}{(1+\xi\bar{\xi})^2}+r\frac{\xi}{1+\xi\bar{\xi}}
\qquad
X_3=\frac{-2(\bar{\xi}\eta+\xi\bar{\eta})}{(1+\xi\bar{\xi})^2}+r\frac{1-\xi\bar{\xi}}{1+\xi\bar{\xi}}.
\end{equation}

For graphical conformal planes, our starting point is equation $(\ref{e:sect})$ which can be simplified by rotation and translation so that $\alpha=-ai$ and $\beta=b$ for $a,b\in{\mathbb R}$. To compute the unit (pseudo)-circle, note that the distance to the origin in flat coordinates via equation (\ref{e:confco1}) is
\[
Z_1\bar{Z}_1-Z_2\bar{Z}_2=\frac{16R^2(a+b\sin2\theta)}{(1-R^2)^2},
\]
where we have introduced polar coordinates $\xi=Re^{i\theta}$.

Here one sees again that the metric is definite on the plane if $a>b>0$ and so, in that case the unit circle in the conformal plane is given by
\begin{equation}\label{e:pcirc}
R=-\left[4(a+b\sin2\theta)\right]^{\scriptstyle{\frac{1}{2}}}+\left[4(a+b\sin2\theta)+1\right]^{\scriptstyle{\frac{1}{2}}}.
\end{equation}

Substituting equation (\ref{e:sect}) in (\ref{e:minit}) and restricting to the curve (\ref{e:pcirc}) one finds after a lengthy calculation that
 the ruled surface in ${\mathbb R}^3$ determined by the unit (pseudo)-circle in a graphical conformal plane satisfies 
\begin{equation}\label{e:ruling_gr}
X_3^2-4aX_1^2-4aX_2^2-8bX_1X_2+a^2-b^2=0,
\end{equation}
for the given constants $a,b\in{\mathbb R}$. This a one-sheeted hyperboloid, as claimed.

Moreover, the conjugate conic, obtained by switching the signs of $a$ and $b$, and the sign of the unit radius, satisfies exactly the same quadratic, as can be confirmed by the same calculation with these signs flipped.  Thus we have reproven John's result that a pair of conjugate circles in definite planes generate a pair of rulings of the hyperboloid.

When $a^2<b^2$, the metric is indefinite and consider the curve defined by equation $(\ref{e:pcirc})$ only for values of $\theta$ for which $R$ is real. These are the pseudo-circles of the Lorentz metric and the two rulings are of the same surface given by equation (\ref{e:ruling_gr}).

For a non-graphical conformal plane with parameters $(\theta,\phi,H)$ as given in equation (\ref{e:nongcp}), the distance to the origin in flat coordinates via equation (\ref{e:confco1}) is
\[
Z_1\bar{Z}_1-Z_2\bar{Z}_2=\frac{32uv\sin\theta}{(1+u^2)(1-u^2)^2}.
\]
Non-degeneracy of the induced metric means that $\theta\neq0,\pi$ and so one restricts to the unit pseudo-circle by setting
\[
v=\pm\frac{(1+u^2)(1-u^2)^2}{32u\sin\theta}.
\]
The resulting 1-parameter family of lines defines a ruled surface in ${\mathbb R}^3$ that, upon substitution in equations (\ref{e:minit}), is found to satisfy
\[
\sin\theta X_3+2(\cos\theta+\cos\phi)X_1^2+4\sin\phi X_1X_2+2(\cos\theta-\cos\phi)X_2^2=0.
\]
This is a hyperbolic paraboloid for $\theta\neq0,\pi$ and flipping to the conjugate pseudo-circle, we find it satisfies the same equation. This completes the proof.

\end{proof}
\vspace{0.1in}

\subsection{Conformal Asgeirsson's Theorem}

As we have seen, on ${\mathbb R}^{2,2}$ the flat Laplace operator $\Delta$ is the ultra-hyperbolic operator which acts on twice continuously differentiable functions $u:{\mathbb R}^{2,2}\rightarrow \mathbb R$ as
$$
\Delta u :=
\frac{\partial^2 u}{\partial x_1^2} + \frac{\partial^2 u}{\partial x_2^2} -
\frac{\partial^2 u}{\partial x_3^2} -
\frac{\partial^2 u}{\partial x_4^2}.
$$
Define $S_0,S_0^{\perp}\subset{\mathbb R}^{2,2}$ to be the pair of circles 
$$
S_0:=\{(\alpha_1,\alpha_2,0, 0) \in {\mathbb R}^{2,2} \ | \ \alpha_1^2 +\alpha_2^2 = 1\},
$$
and
$$
S_0^\perp:=\{(0,0,\beta_1,\beta_2) \in{\mathbb R}^{2,2} \ |\ \beta_1^2 +\beta_2^2 = 1\}.
$$
In this dimension the original mean value theorem of Asgeirsson can be stated:
\vspace{0.1in}

\begin{Thm} \cite{LA}\label{Asgeirsson}
Let $u:{\mathbb R}^{2,2}\rightarrow \mathbb R$ be a solution of $\Delta u=0$. Then the following integral equation holds
$$
\int_{S_0} u \ dl = \int_{S_0^\perp} u \ dl,
$$
where $dl$ represents the line element induced by the flat metric.
\end{Thm}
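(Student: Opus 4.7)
The plan is to reduce the equality of the two line integrals to a symmetry property of a two-parameter mean, and to deduce that symmetry from an Euler--Poisson--Darboux (EPD) equation satisfied by the mean.

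First I would introduce the double spherical mean
\begin{equation*}
W(r,s)=\frac{1}{(2\pi)^2}\int_0^{2\pi}\!\!\int_0^{2\pi} u(r\cos\theta,r\sin\theta,s\cos\phi,s\sin\phi)\,d\theta\,d\phi.
\end{equation*}
Since $W(1,0)=\tfrac{1}{2\pi}\int_{S_0}u\,dl$ and $W(0,1)=\tfrac{1}{2\pi}\int_{S_0^\perp}u\,dl$, the statement reduces to the single identity $W(1,0)=W(0,1)$.

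Next I would derive the PDE satisfied by $W$. The two-dimensional Laplacian in polar coordinates on the $(x_1,x_2)$-plane equals $\partial_r^2+r^{-1}\partial_r+r^{-2}\partial_\theta^2$, so applying this identity inside the integral defining $W$ and killing the angular term by periodicity in $\theta$ gives
\begin{equation*}
\left(\partial_r^2+r^{-1}\partial_r\right)W=\frac{1}{(2\pi)^2}\iint \bigl(\partial_{x_1}^2 u+\partial_{x_2}^2 u\bigr)\,d\theta\,d\phi,
\end{equation*}
with the analogous identity in $(s,\phi)$ producing the same right-hand side but with $\partial_{x_3}^2 u+\partial_{x_4}^2 u$ in the integrand. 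The ultra-hyperbolic equation (\ref{UHE}) makes these two integrands coincide, so
\begin{equation*}
W_{rr}+\frac{1}{r}W_r=W_{ss}+\frac{1}{s}W_s.
\end{equation*}

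Third, I would exploit this EPD equation by a power series argument. Passing to the variables $\alpha=r^2$, $\beta=s^2$ (in which $W$ is smooth, being even in each of $r$ and $s$ by angular symmetry of the means) the equation becomes the symmetric form $(\alpha W_\alpha)_\alpha=(\beta W_\beta)_\beta$. Writing $W=\sum_{m,n\ge 0}c_{mn}\alpha^m\beta^n$ and matching coefficients yields the recursion $(m+1)^2\,c_{m+1,n}=(n+1)^2\,c_{m,n+1}$. Iterating along the anti-diagonal $m+n=k$, the $k$ ratios $(k-j)^2/(j+1)^2$ for $j=0,\ldots,k-1$ telescope to $(k!)^2/(k!)^2=1$, so $c_{k,0}=c_{0,k}$ for every $k\ge 0$. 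Consequently the power series for $W(r,0)$ and $W(0,r)$ in the single variable $r^2$ agree term by term, giving $W(r,0)=W(0,r)$ and hence the required equality at $r=1$.

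The main obstacle is passing from the formal Taylor-series identity at the origin to a genuine pointwise equality at $r=1$, since $C^2$ (and even $C^\infty$) solutions of (\ref{UHE}) need not be real-analytic. I would handle this by approximation: convolving $u$ with a smooth mollifier on ${\mathbb R}^{2,2}$ preserves solutions of the constant-coefficient equation by linearity and translation invariance, and a further Gaussian regularization on any compact ball yields real-analytic solutions for which the series argument is rigorous; uniform convergence on compact sets then propagates the identity back to $u$ itself.
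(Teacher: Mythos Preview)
The paper does not supply a proof of this theorem; it is quoted from Asgeirsson's original article \cite{LA} (and is treated as background, cf.\ also \cite{couranthilbert}). So there is nothing in the paper to compare against directly.

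That said, your scheme is exactly the classical one: introduce the double spherical mean $W(r,s)$, derive the Euler--Poisson--Darboux equation $W_{rr}+r^{-1}W_r=W_{ss}+s^{-1}W_s$, and conclude the symmetry $W(r,0)=W(0,r)$. The derivation of the EPD equation and the change of variables $\alpha=r^2$, $\beta=s^2$ are correct, and the recursion $(m+1)^2c_{m+1,n}=(n+1)^2c_{m,n+1}$ does give $c_{k,0}=c_{0,k}$.

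The only soft spot is the one you flag yourself. The power-series route genuinely requires an analytic approximant, and your description of the regularization is a bit loose: convolving $\chi u$ with a Gaussian gives an \emph{entire} function that solves the ultra-hyperbolic equation only on the region where $\chi\equiv 1$ minus an $\epsilon$-neighbourhood of $\operatorname{supp}\nabla\chi$, so you must choose $\chi\equiv 1$ on a ball containing both unit circles, observe that the Taylor coefficients of $W_\epsilon$ at the origin are fixed by the EPD equation holding in a small neighbourhood of $(0,0)$, and then use entirety to propagate the identity out to $r=1$ before letting $\epsilon\to 0$. This works, but the standard proofs (Asgeirsson, Courant--Hilbert) bypass it entirely: they deduce $W(r,s)=W(s,r)$ for $C^2$ solutions directly from a uniqueness theorem for the singular hyperbolic Darboux equation (e.g.\ an energy or characteristic argument in the triangle $0\le s\le r$), with no analyticity or approximation needed. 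If you want a clean write-up, that is the simpler route.
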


\vspace{0.1in}

The following extension of Asgeirsson's Theorem says that the mean value property is invariant under conformal transformations.

\vspace{0.1in}
\begin{Prop}
Let $u:{\mathbb R}^{2,2}\rightarrow \mathbb R$ be a solution of $\Delta u=0$ and $f:{\mathbb R}^{2,2} \rightarrow {\mathbb R}^{2,2}$ be a conformal map. Then

$$
\int_{f(S_0)} u \ dl = \int_{f(S_0^\perp)} u \ dl,
$$
where $dl$ represents the line element induced by the flat metric.
\end{Prop}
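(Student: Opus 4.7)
The plan is to reduce the statement to the original Asgeirsson Theorem \ref{Asgeirsson} via the conformal transformation machinery already established. Since we work in dimension $n=4$, the exponent $(n-2)/2$ in Proposition \ref{conformalsolutions} is simply $1$, which will make the bookkeeping clean: the single factor of $\Omega$ that arises from the Laplacian's conformal change will exactly cancel the single factor of $\Omega$ that arises from the line element's conformal change.

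Concretely, I would proceed as follows. Write the conformal map as $f^*g = \Omega^2 g$, where $g$ is the flat neutral metric on $\mathbb{R}^{2,2}$. Since $(\mathbb{R}^{2,2},g)$ is scalar flat, Proposition \ref{conformalsolutions} applies with $M=N=\mathbb{R}^{2,2}$ and $g^M=g^N=g$, yielding that
\[
v := \Omega \cdot (u\circ f)
\]
is a solution of $\Delta v=0$ whenever $u$ is. Next, recall that for any smooth curve $\gamma$ in $\mathbb{R}^{2,2}$, conformality of $f$ gives $|f_*\gamma'|_{g}=\Omega|\gamma'|_{g}$, so the induced line elements are related by
\[
\int_{f(\gamma)} u\,dl \;=\; \int_{\gamma} (u\circ f)\,\Omega\,dl \;=\; \int_{\gamma} v\,dl.
\]
Applying this to $\gamma=S_0$ and $\gamma=S_0^\perp$ and then invoking Theorem \ref{Asgeirsson} for the harmonic function $v$ on the standard unit pair gives
\[
\int_{f(S_0)} u\,dl \;=\; \int_{S_0} v\,dl \;=\; \int_{S_0^\perp} v\,dl \;=\; \int_{f(S_0^\perp)} u\,dl,
\]
which is exactly the desired identity.

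The main conceptual point, rather than a real obstacle, is the coincidence that in dimension $n=4$ the conformal weight of a harmonic function (namely $\Omega^{(n-2)/2}=\Omega$) and the conformal weight of a one-dimensional length element (also $\Omega$) agree. This is what allows the naive ``pull back the function and pull back the curve'' strategy to work without any residual factors, and it is the reason the statement holds as written rather than only up to a non-constant density. The only minor technical check to perform carefully is the direction of the weights in Proposition \ref{conformalsolutions} (i.e.\ verifying that one should multiply $u\circ f$ by $\Omega$ rather than divide); this is settled by the explicit computation of $\Delta_{\tilde g^N}$ in the proof of that proposition, combined with the vanishing of $\Delta_{g}\Omega$ which follows from Proposition \ref{cfharmonic}.
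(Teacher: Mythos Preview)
Your proof is correct and follows essentially the same approach as the paper: both pull back the line integral via the conformal map to pick up a factor of $\Omega$, define $v=\Omega\,(u\circ f)$, invoke Proposition \ref{conformalsolutions} to see that $v$ is harmonic, and then apply the original Asgeirsson Theorem \ref{Asgeirsson} to $v$ on the standard pair $S_0,S_0^\perp$. Your added remark about the dimension-$4$ coincidence between the harmonic weight $\Omega^{(n-2)/2}$ and the arclength weight $\Omega$ is a nice piece of context that the paper's proof does not spell out.
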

\vspace{0.1in}
\begin{proof}
Let $\Omega$ be the conformal factor of $f$.
Let $\alpha: I \rightarrow S_0$ be a parametrization of $S_0$. Then
\begin{align}
\int_{f(S_0)} u \ dl & = \int_I u((f\circ \alpha)(t)) \cdot  ||(f\circ\alpha)'(t)|| \ dt \\
& = \int_I (u\circ f)(\alpha(t)) \cdot  \Omega ||\alpha'(t)|| \ dt \\
& = \int_{S_0} \Omega(u\circ f) \ dl.
\end{align}

Similarly we would get

$$
\int_{f(S_0^\perp)} u \ dl = \int_{S_0^\perp} \Omega(u\circ f) \ dl.
$$

By Proposition \ref{conformalsolutions} we have that $\Delta \Omega (u\circ f)=0$, so we can apply Asgeirsson's theorem (Theorem \ref{Asgeirsson}) to the integrand $\Omega (u\circ f)$.
\end{proof}

\vspace{0.1in}

This can also be expressed more geometrically as

\vspace{0.1in}
\noindent{\bf Theorem \ref{t:4}}

{\it
Let $v:TS^2\rightarrow \mathbb R$ be a solution of $\Delta_{\mathbb G} v=0$ and $f:TS^2 \rightarrow TS^2$ be a conformal map. Then
\[
\int_{f(S_0)} v \ d\tau = \int_{f(S_0^\perp)} v \ d\tau.
\]
where $d\tau$ is the line element induced by the canonical metric ${\mathbb G}$.
}
\vspace{0.1in}

\section{Conformal Images of Conjugate Circles}\label{conformalimage}

\vspace{0.1in}
\subsection{Pseudo-Euclidean affine space}

Let ${\mathbb R}^{2,2}$ be regarded as an affine space, denoting the underlying vector space by $\overrightarrow{{\mathbb R}^{2,2}}$\footnote{In general, given an affine subspace $\sigma\subseteq {\mathbb R}^{2,2}$ we denote by $\overrightarrow \sigma \subset \overrightarrow{{\mathbb R}^{2,2}}$ its associated linear subspace, or {\em direction}.}.

Let $Q$ be the indefinite distance function associated with the metric, viewed as a quadratic form acting on vectors in $\overrightarrow{{\mathbb R}^{2,2}}$:
$$
Q(x) = x_1^2 + x_2^2 - x_3^2 -x_4^2 \quad x\in\overrightarrow{{\mathbb R}^{2,2}}.
$$
We use $||x||^2$ to mean $Q(x)$ and say that a vector $x\in \overrightarrow{{\mathbb R}^{2,2}}$ is {\em positive} if $||x||^2>0$, {\em negative} if $||x||^2<0$, and {\em null} whenever $||x||^2=0$. 

Let 
$$\langle x, y \rangle := x_1 y_1 + x_2 y_2 -x_3 y_3 - x_4 y_4,$$ denote the associate symmetric bilinear form to $Q$ for all $x,y\in\overrightarrow{{\mathbb R}^{2,2}}$.

\vspace{0.1in}
\begin{Def}
Two points $p,q\in {\mathbb R}^{2,2}$ are called \emph{skew} if $||\overrightarrow{pq}||^2 \neq 0$ and \emph{null-separated} if $||\overrightarrow{pq}||^2 = 0$. We will say that three points $q,q',q''\in {\mathbb R}^{2,2}$ are skew when they are pairwise skew.
\end{Def} 

We use the symbol $\perp$ to denote orthogonality with respect to $\langle, \rangle$. Given a linear subspace $W\subset \overrightarrow{{\mathbb R}^{2,2}}$ we define its orthogonal complement as usual
$$
W ^\perp :=\{v \in \overrightarrow{{\mathbb R}^{2,2}} \ | \ \langle v, w\rangle =0\ \text{ for all } w\in W \}.
$$

Let $P$ be a $2$-subspace of $\mathbb R^{2,2}$ through the origin. Define the {\em degeneracy} of the restriction $\langle, \rangle_P$ to be
$$\ker \langle, \rangle_P:=\{v\in P \subset \overrightarrow{{\mathbb R}^{2,2}} \ | \ \langle v, w\rangle = 0 \text{ for all } w\in P  \}.$$ 
\begin{Def}[Metric types of planes]\label{metrictypes}
A linear plane $P$ is said to be 

\begin{enumerate}
\item {\em non-degenerate} whenever $\ker \langle, \rangle_P = \{0\}$. In which case $P$ is called 

\begin{enumerate}
\item  {\em positive definite} when $Q_P\geq 0$, 
\item {\em negative definite} when $Q_P\leq 0$ and
\item {\em hyperbolic} otherwise. 
\end{enumerate}

\item {\em degenerate} or {\em parabolic} when $\ker \langle, \rangle_P$ has dimension $1$. We distinguish
\begin{enumerate} 
\item {\em positive parabolic} if $Q_P\geq 0$ and
\item {\em negative parabolic} if $Q_P\leq 0$.
\end{enumerate}
\item {\em totally null} when $\ker \langle, \rangle_P = P$. 
\end{enumerate}
\end{Def}

The same can be defined for an affine plane $\pi$. This way, $\pi$ will be non-degenerate iff its direction $\overrightarrow \pi$ is non-degenerate, $\pi$ will be positive definite iff $\overrightarrow \pi$ is positive definite, etc. Note that $\pi$ is the same as a conformal plane defined in Section \ref{s:2.3}. 

\vspace{0.1in}

\subsection{Non-degenerate conjugate conics}

\begin{Def}[Non-degenerate conjugate conics]
\label{conjugateconicsdef}
Let $O$ be a point in $\mathbb R^{2,2}$, $c$ a real number and $\pi$ a non-degenerate affine plane through $O$. The {\em conjugate conics} $S, S^\perp$ are 

$$
S:=\{p\in \pi \ | \ ||\overrightarrow{Op}||^2 =c^2\},
$$
\text{and}
$$
S^\perp:=\{p\in \pi^\perp \ | \ ||\overrightarrow{Op}||^2 =-c^2\},
$$
where $\pi^\perp:= O + {\overrightarrow \pi}^\perp$.

\end{Def}

\vspace{0.1in}

\begin{Note} \label{Asgcirclesareconjugate}
The circles $S_0$ and $S_0^\perp$ from Asgeirsson's theorem form a pair of non-degenerate conjugate conics. We can see this by letting $O$ be the origin of $\mathbb R^{2,2}$, $c=1$ and $\pi$ spanned by $(1,0,0,0)$ and $(0,1,0,0)$.
\end{Note}

In this section we want to show that $S^\perp$ can be obtained as the locus of points which lie $0$ distance (measured by $Q$) from any three points in $S$. Similarly, we will see that $S$ can be obtained as the locus of points lying $0-$distance from any three points in $S^\perp$. 

\begin{Def}
For $q\in {\mathbb R}^{2,2}$, define the {\em isotropic cone} $C_q$ to be
$$
C_q := \{ p \in {\mathbb R}^{2,2} \ | \ ||\overrightarrow{qp}||^2 = 0\}.
$$
Given 3 points $q,q',q''\in{\mathbb R}^{2,2}$, define

$$
N(q,q',q'') := C_q \cap C_{q'} \cap C_{q''}.
$$
\end{Def}

\vspace{0.1in}

\begin{Prop} \label{confnullity}
Let $f$ be a conformal map, then $f(C_p) = C_{f(p)}$ for all $p\in {\mathbb R}^{2,2}$.
\end{Prop}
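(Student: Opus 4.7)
My plan is to exploit the geometric fact that $C_p$ is the union of null straight lines through $p$, and then to argue that a conformal map must carry such null straight lines to null straight lines.

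First I would note that, for $q\neq p$, one has $q\in C_p$ precisely when $\overrightarrow{pq}$ is null, which happens iff the line segment joining $p$ and $q$ has null tangent vector at every point, i.e.\ is a null geodesic of the flat metric $g$. Thus $C_p$ is the union of the null straight lines through $p$, and the proposition reduces to showing that $f$ sends each null line through $p$ to a null line through $f(p)$.

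Next I would invoke the standard principle from semi-Riemannian geometry that two conformally related metrics share the same \emph{null pre-geodesics} (null geodesics up to reparametrization). Since $f^*g=\Omega^2 g$, we may regard $f$ as an isometry $({\mathbb R}^{2,2},\Omega^2 g)\to ({\mathbb R}^{2,2},g)$, so it takes pre-geodesics of $\Omega^2 g$ to pre-geodesics of $g$; specializing to the null case, the null pre-geodesics of $\Omega^2 g$ coincide with those of $g$, namely null straight lines, and these are therefore carried to null straight lines of the target. Combining the two steps yields $f(C_p)\subseteq C_{f(p)}$, and the reverse inclusion follows by applying the same argument to $f^{-1}$, which is also conformal.

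The main technical obstacle is the null pre-geodesic preservation fact; although standard, it relies on the Levi-Civita machinery of the two conformally related metrics and on the subtlety that a mere conformal diffeomorphism preserves geodesics only in the null direction. An alternative and more elementary route would be to invoke a Liouville-type characterization of (local) conformal diffeomorphisms of ${\mathbb R}^{2,2}$ as compositions of translations, pseudo-rotations, dilations, and inversions, and then verify the desired property on each generator; the only non-trivial case is the inversion $x\mapsto x/\|x\|^2$, which can be handled by a short direct computation on the open dense set of non-null vectors.
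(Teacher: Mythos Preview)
Your argument is correct; both routes you sketch (preservation of null pre-geodesics, or Liouville decomposition plus a check on inversions) are standard and valid.  There is essentially nothing to compare with, because the paper does not supply a proof at all: it simply cites \cite[Theorem~3.32]{rosenfeld}.  So your proposal is strictly more informative than what appears in the paper.

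One small caveat worth flagging: a global conformal transformation of ${\mathbb R}^{2,2}$ (such as an inversion) is not everywhere defined on ${\mathbb R}^{2,2}$, and a null line through $p$ may be sent to a null line through $f(p)$ with a point removed (the preimage of infinity).  The paper silently works on the conformal compactification $C^4_2={\mathbb R}^{2,2}\cup C_\infty$ when it later invokes this proposition (see the proof of Proposition~\ref{3trans}), and the cited result in Rosenfeld is stated in that setting.  Your null-pre-geodesic argument goes through cleanly once one interprets $f$ as a bijection of $C^4_2$, or alternatively one can read the identity $f(C_p)=C_{f(p)}$ as holding on the domain of $f$; either way the substance of your reasoning is unaffected.
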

\begin{proof} \cite[Theorem~3.32]{rosenfeld}.
\end{proof}

\vspace{0.1in}

\begin{Cor}\label{fN=Nf}
Let $f$ be a conformal map and $q,q',q''$ three points in ${\mathbb R}^{2,2}$. Then 
$$
f(N(q,q',q'')) = N(f(q), f(q'), f(q'')).
$$
\end{Cor}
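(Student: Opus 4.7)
The plan is to combine the definition of $N(q,q',q'')$ as the triple intersection of isotropic cones with Proposition \ref{confnullity}, which has already shown that a conformal map carries isotropic cones to isotropic cones. Since conformal maps on $\mathbb{R}^{2,2}$ are diffeomorphisms (in particular, injective), images commute with finite intersections, and the corollary will then follow by unfolding and refolding the definition of $N$.

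Concretely, I would first note that a conformal $f$ is a bijection onto its image, hence $f(A\cap B\cap C)=f(A)\cap f(B)\cap f(C)$ for any sets $A,B,C$. Starting from
\[
f(N(q,q',q''))=f\bigl(C_q\cap C_{q'}\cap C_{q''}\bigr),
\]
I would apply this set-theoretic identity to split the image across the intersection, then apply Proposition \ref{confnullity} three times (once at each of $q$, $q'$, $q''$) to rewrite $f(C_q)=C_{f(q)}$ and similarly for $q', q''$. Finally, folding the definition of $N$ in the other direction yields
\[
C_{f(q)}\cap C_{f(q')}\cap C_{f(q'')}=N(f(q),f(q'),f(q'')),
\]
as required.

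There is no real obstacle: the entire content of the statement is contained in Proposition \ref{confnullity}, and the only thing to be careful about is flagging the injectivity of $f$ so that the exchange of image and intersection is justified. Everything else is bookkeeping with the definitions.
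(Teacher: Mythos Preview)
Your proposal is correct and matches the paper's approach: the paper states this as an immediate corollary of Proposition~\ref{confnullity} without writing out a proof, and your argument---unfolding $N$ as a triple intersection of isotropic cones, applying $f(C_p)=C_{f(p)}$ three times, and using injectivity of $f$ to commute image with intersection---is precisely the implicit reasoning. There is nothing to add.
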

\vspace{0.1in}

\begin{Prop}\label{duality} Let $S,S^\perp$ be a pair of non-degenerate conjugate conics. 
For all distinct $q,q',q''\in S$ 
$$
S^\perp = N(q,q',q''),
$$
and for all distinct $p,p',p''\in S^\perp$
$$
S = N(p,p',p'').
$$
\end{Prop}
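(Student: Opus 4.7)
The plan is to prove the two stated equalities by containment in both directions, with the second equality following from symmetry of the definition of conjugate conics.

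For the easy inclusion $S^\perp \subseteq N(q,q',q'')$, I would compute $||\overrightarrow{qp}||^2$ for arbitrary $p \in S^\perp$ and $q \in S$. Decomposing $\overrightarrow{qp} = -\overrightarrow{Oq} + \overrightarrow{Op}$ and using that $\overrightarrow{Oq} \in \overrightarrow{\pi}$ and $\overrightarrow{Op} \in \overrightarrow{\pi}^\perp$ are orthogonal, a two-line calculation yields $||\overrightarrow{qp}||^2 = c^2 + (-c^2) = 0$, so every point of $S^\perp$ is null-separated from every point of $S$.

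For the reverse inclusion $N(q,q',q'') \subseteq S^\perp$, write $v = \overrightarrow{Or}$ for $r \in N(q,q',q'')$, and decompose $v = v_\pi + v_\perp$ using the orthogonal direct sum $\overrightarrow{\pi} \oplus \overrightarrow{\pi}^\perp$, which is valid precisely because $\pi$ is non-degenerate. Setting $w_i := \overrightarrow{Oq_i} \in \overrightarrow{\pi}$, so that $||w_i||^2 = c^2$, the three null-separation conditions expand to
\[
2\langle v_\pi, w_i\rangle = ||v||^2 + c^2, \qquad i = 1,2,3,
\]
and pairwise differences give $\langle v_\pi, w_1 - w_2\rangle = \langle v_\pi, w_1 - w_3\rangle = 0$. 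The key step is to argue that $w_1 - w_2$ and $w_1 - w_3$ span the two-dimensional space $\overrightarrow{\pi}$: if they were linearly dependent, the three points $q_1, q_2, q_3$ would be collinear, contradicting the fact that a line meets a non-degenerate conic (a circle or hyperbola) in at most two points. Non-degeneracy of $\langle,\rangle$ on $\overrightarrow{\pi}$ then forces $v_\pi = 0$, so $r \in \pi^\perp$, and substituting $v_\pi = 0$ back into any of the three equations yields $||v||^2 = -c^2$, hence $r \in S^\perp$.

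Finally, the symmetric statement is automatic: the definition of non-degenerate conjugate conics is invariant under the swap $(\pi, c^2) \leftrightarrow (\pi^\perp, -c^2)$, so applying the argument above with the roles of $S$ and $S^\perp$ interchanged gives $S = N(p,p',p'')$ for distinct $p,p',p'' \in S^\perp$. The only step where the non-degeneracy hypothesis is essential, and hence the main obstacle to extending the result to parabolic or totally null planes, is the linear independence claim for $w_1 - w_2$ and $w_1 - w_3$; the rest of the argument works formally in any signature.
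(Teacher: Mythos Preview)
Your proof is correct and follows essentially the same approach as the paper: the paper packages the orthogonality step $\langle \overrightarrow{pO}, \overrightarrow{\pi}\rangle = 0$ as a separate lemma (obtained by the same subtraction of the expanded null-separation equations), and the easy inclusion is handled identically via $||\overrightarrow{pq}||^2 = ||\overrightarrow{pO}||^2 + ||\overrightarrow{Oq}||^2 = -c^2 + c^2$. Your argument is in fact slightly more explicit than the paper's about why $w_1 - w_2$ and $w_1 - w_3$ span $\overrightarrow{\pi}$.

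One small correction to your closing remark: non-degeneracy of $\pi$ is used not only in the spanning step, but also (and more fundamentally) to guarantee the direct-sum decomposition $\overrightarrow{\mathbb{R}^{2,2}} = \overrightarrow{\pi} \oplus \overrightarrow{\pi}^\perp$ that you invoke to write $v = v_\pi + v_\perp$, and to conclude $v_\pi = 0$ from $v_\pi \perp \overrightarrow{\pi}$. So the obstruction to the degenerate case appears in several places, not just one.
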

\begin{proof}
The proof follows from two Lemmas:

\vspace{0.1in}

\begin{Lem}\label{conicthreepoints}
Let $q,q',q''$ be three skew points on a non-degenerate affine plane $\pi$. Then there exists a unique point $O\in \pi$ and a real number $c^2$ so that $||\overrightarrow{Oq}||^2 = ||\overrightarrow{Oq'}||^2 = ||\overrightarrow{Oq''}||^2 = c^2$.
\end{Lem}
\begin{proof} 
The locus of points in $\pi$ which are equidistant to $q$ and $q'$ is
$$m_{qq'}:=q + \frac{\overrightarrow{qq'}}{2} + {\mbox{span }}\{\overrightarrow{qq'}\}^\perp,$$

Similarly, let $m_{qq''}$ be the locus of points equidistant to $q$ and $q''$. Since
$q,q',q''$ are assumed to be in general position we have ${\mbox{span }}\{\overrightarrow{qq'}\}\neq {\mbox{span }}\{\overrightarrow{qq''}\}$. As $Q_{\overrightarrow{\pi}}$ is non-degenerate we will have  ${\mbox{span }}\{\overrightarrow{qq'}\}^\perp \neq {\mbox{span }}\{\overrightarrow{qq''}\}^\perp$. Hence the affine lines $m_{qq'}$ and $m_{qq''}$ must intersect on a unique point $O$ in $\pi$. Then $$||\overrightarrow{Oq}||^2 = ||\overrightarrow{Oq'}||^2 = ||\overrightarrow{Oq''}||^2,$$ and we define the radius $c^2$ to be its value.
\end{proof}

\vspace{0.1in}

\begin{Lem} \label{0to3}
Let $q,q',q''$ be three skew points in ${\mathbb R}^{2,2}$ on a non-degenerate plane $\pi$. Let $p\in N(q,q',q'')$, then
$$\langle \overrightarrow{pO}, \overrightarrow{\pi}\rangle=0.$$
\end{Lem}
\begin{proof}
Let $O\in \pi$ be the center of $q,q',q''$. Expanding $||\overrightarrow{pq}||^2=0$ and $||\overrightarrow{pq'}||^2=0$ we get
\begin{align*}
&||\overrightarrow{pO}||^2 +\langle \overrightarrow{pO}, \overrightarrow{Oq}\rangle +  ||\overrightarrow{Oq}||^2  = 0, \\
& ||\overrightarrow{pO}||^2 +\langle \overrightarrow{pO}, \overrightarrow{Oq'}\rangle +  ||\overrightarrow{Oq'}||^2  = 0.
\end{align*}
Substracting these equations we get $\langle \overrightarrow{pO}, \overrightarrow{qq'}\rangle = 0$, and similarly for $q''$. 
\end{proof}
\vspace{0.1in}

Returning to the proof of the Proposition, let $S,S^\perp$ be a pair of non-degenerate conjugate conics center $O$ and radius $c^2$. For $q,q',q''$ distinct points in $S$, we prove $S^\perp= N(q,q',q'')$. We first see $S^\perp\subseteq N(q,q',q'')$: Let $p\in S^\perp$. Observe 
$$||\overrightarrow{pq}||^2 = ||\overrightarrow{pO}||^2 + ||\overrightarrow{Oq}||^2 = -c^2 +c^2 =0,$$
and the same applies to any point in $S$, in particular $q'$ and $q''$. 

We now see $S^\perp\supseteq N(q,q',q'')$: Let $p\in N(q,q',q'')$. By Lemma \ref{0to3} $\langle \overrightarrow{pO}, \overrightarrow{Oq}\rangle = 0$, so
$$0 = ||\overrightarrow{pq}||^2 = ||\overrightarrow{pO}||^2 + ||\overrightarrow{Oq}||^2 = ||\overrightarrow{pO}||^2 +c^2 =0.$$

Hence $||\overrightarrow{pO}||^2 = -c^2$ and $p\in S^\perp$. \\

For $p,p',p''$ distinct points in $S^\perp$, we prove $S= N(p,p',p'')$.
$\subseteq$: Let $q\in S$, a similar argumentation as before works.

$\supseteq$: Let $q\in N(p,p',p'')$. By Lemma \ref{0to3} $\langle \overrightarrow{qO}, {\overrightarrow \pi}^\perp \rangle = 0$ so it follows that $q\in O + {{\overrightarrow \pi}^\perp}^\perp = \pi$.

This completes the proof of Proposition \ref{duality}.
\end{proof}

\vspace{0.1in}
\subsection{The conformal image of circles}

\begin{Prop} \label{3trans}
Conformal transformations act transitively on triples of skew points of ${\mathbb R}^{2,2}$.
\end{Prop}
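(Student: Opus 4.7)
The plan is to establish transitivity by a cumulative reduction, mapping the three points one at a time, each step using a smaller subgroup of the conformal group.

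First I would use the translation $T_v:x\mapsto x+v$ with $v=q_1'-q_1$, which is conformal and takes $q_1$ to $q_1'$. So it suffices to assume $q_1=q_1'=O$ and to produce a conformal map fixing $O$ that carries $(q_2,q_3)$ to $(q_2',q_3')$. The stabiliser of $O$ in the conformal group is generated by the pseudo-orthogonal group $O(2,2)$, the dilations $D_\lambda:x\mapsto\lambda x$, and the four-parameter family of special conformal transformations
\[
S_b(x)=\frac{x-||x||^2\, b}{1-2\langle b,x\rangle+||b||^2||x||^2},
\]
each of which fixes $O$. A short calculation gives
\[
||S_b(v)||^2=\frac{||v||^2}{1-2\langle b,v\rangle+||b||^2||v||^2},
\]
so the sign of the squared norm can be flipped by choosing $b$ so that the denominator is negative. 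This is essential, because $O(2,2)$ and $D_\lambda$ together act transitively only within each of the two open sets $\{||v||^2>0\}$ and $\{||v||^2<0\}$, and to reach all skew $q_2'$ from $q_2$ one needs to jump between them.

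Combining these ingredients, a dilation scales $q_2$ so that $||q_2||^2=\pm 1$, an element of $O(2,2)$ then rotates it to a standard unit vector $e_1$ (if positive) or $e_3$ (if negative), and a suitably chosen $S_b$ interchanges the two cases when necessary. Performing the same normalisation for $q_2'$ and composing the two conformal maps gives one that sends $q_2$ to $q_2'$ while still fixing $O$. For the third point, I would then show that the joint stabiliser of $\{O,q_2\}$ in the conformal group acts transitively on points skew from both. Dimensionally the conformal group of $\mathbb{R}^{2,2}$ is $15$-dimensional, a skew ordered pair has an $8$-dimensional orbit, leaving a $7$-dimensional joint stabiliser to act on the $4$-dimensional variety of admissible $q_3$, which is plausible. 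Concretely, once $q_2$ is normalised, this joint stabiliser is generated by the $3$-dimensional subgroup of $O(2,2)$ fixing $q_2$ together with the $3$-parameter family of $S_b$ satisfying $S_b(q_2)=q_2$, and a direct verification should show these suffice to move $q_3$ onto $q_3'$.

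The main obstacle I expect is precisely this last step, where the available freedom has become constrained and a case analysis on the signs of $||q_3||^2$ and $||q_3-q_2||^2$ is needed to handle each configuration, again invoking a special conformal transformation to flip signs when necessary. A cleaner alternative, which I would turn to if the case analysis became unwieldy, is to lift the problem to the M\"obius quadric: embed $\mathbb{R}^{2,2}$ as an open subset of the projective null quadric $Q\subset \mathbb{RP}^5$ of signature $(3,3)$, on which the conformal group acts as a quotient of $O(3,3)$. A triple of skew points then corresponds to a triple of null directions in $\mathbb{R}^{3,3}$ with non-zero pairwise inner products; after rescaling each null vector to normalise these pairwise inner products (the sign ambiguity being absorbed in projectivisation), Witt's extension theorem produces the required isometry of $\mathbb{R}^{3,3}$ directly, bypassing the case analysis altogether.
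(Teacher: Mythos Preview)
Your overall strategy is sound, and the Witt-extension alternative at the end is a valid and elegant way to close the argument; but your direct three-step reduction is both more laborious than necessary and incomplete at the third step, where ``a direct verification should show these suffice'' is asserted rather than carried out. The joint stabiliser of $\{O,e_1\}$ in the conformal group is genuinely awkward to work with: dilations do not fix $e_1$, and the constraint $S_b(e_1)=e_1$ cuts the special conformal family down in a way that interacts non-trivially with the sign-of-norm cases you correctly anticipate.

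The paper sidesteps all of this by working in the conformal compactification $C^4_2=\mathbb{R}^{2,2}\cup C_\infty$ and sending the \emph{second} point to $\infty$ rather than to a finite unit vector. After translating $q\mapsto\mathbf{0}$, an inversion in the pseudo-sphere centred at $q'$ passing through $\mathbf{0}$ sends $q'\mapsto\infty$ while fixing $\mathbf{0}$. The stabiliser of the pair $\{\mathbf{0},\infty\}$ is then purely linear---dilations together with $O(2,2)$ and the anti-orthogonal swap $(a,b,c,d)\mapsto(c,d,a,b)$---so normalising the third point $q''$ to $e_1$ is immediate: if $q''$ is positive, extend it to a $Q$-orthonormal basis and use the resulting element of $O(2,2)$; if negative, first hit it with the swap. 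This is exactly the simplification your special-conformal machinery is groping towards, since $S_b$ is nothing but conjugation of a translation by inversion; using the inversion directly to create a point at infinity makes the bookkeeping disappear. Your M\"obius-quadric route via $O(3,3)$ and Witt reaches the same conclusion by a more abstract path, and is a perfectly good substitute.
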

\begin{proof}
For the proof of this Proposition we consider conformal transformations as being bijections of conformal space $C^4_2:={\mathbb R}^{2,2}\cup C_\infty$ (see \cite{russian} \cite{rosenfeld}).
Let $q,q',q''$ be three skew points in $C^4_2$. Let $e_i$ be the canonical basis of ${\mathbb R}^{2,2}$. We show that we can find a conformal mapping sending $q,q',q''$ to $\bf 0, \infty, 1$; where ${\bf 0}=(0,0,0,0)$ and ${\bf 1}=e_1$. \\

One can assume $q=\bf 0$ by use of a translation mapping to the origin. One can also assume $q'=\infty$ since an inversion with respect to the hypersphere centered at $q'$ going through $\bf 0$  maps $q'$ to $\infty$ while fixing $\bf 0$. We show that there is a conformal mapping fixing $\bf 0$ and $\infty$, which sends $q''$ to $\bf 1$. \\

The point $q''$ is non-null as $q$ and $q''$ were assumed to be skew, and conformal transformations preserve skewness of points. Assume $q''$ is positive and let $u_1:=q'', \ldots, u_4$ a basis of ${\mathbb R}^{2,2}$ with Gram matrix $I^{2,2}:={\mbox{diag }}(1,1,-1,-1)$. Let $M$ be the $4\times 4$ real matrix whose columns are the coordinates of the vectors $u_i$ with respect to the canonical basis $e_i$. Then $M^T I^{2,2} M =I^{2,2}$ and $Me_1= q''$, so $M$ is the matrix of an orthogonal transformation mapping $e_1$ to $q''$. Orthogonal transformations are conformal, and so we are done. \\

If $q''$ is negative then we can find a basis $v_1,v_2,v_3:= q'',v_4$ of ${\mathbb R}^{2,2}$ with Gram matrix $I^{2,2}$. By a similar argument as before we can find an orthogonal transformation of sending $e_3$ to $q''$. Precomposing with the {\em anti}-orthogonal (and therefore conformal) transformation $(a,b,c,d)\mapsto (c,d,a,b)$ we get a conformal map sending $q''$ to $e_1$.
\end{proof}

\vspace{0.1in}
\begin{Prop} 
Let $S,S^\perp$ be a pair of non-degenerate conjugate conics. There exists a conformal transformation $f: {\mathbb R}^{2,2} \rightarrow {\mathbb R}^{2,2}$ such that $S=f(S_0)$ and $S^\perp = f(S_0^\perp)$. 
\end{Prop}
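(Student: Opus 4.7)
My plan is to use Proposition \ref{3trans} (transitivity of conformal transformations on triples of skew points) together with the characterization $S^\perp = N(q,q',q'')$ from any three skew points on $S$ (Proposition \ref{duality}) and the invariance $f(N(q,q',q''))=N(f(q),f(q'),f(q''))$ of Corollary \ref{fN=Nf}. The idea is that matching three skew points of $S_0$ to three skew points of $S$ forces the entire duality setup to be preserved, and hence forces $f(S_0)$ and $f(S_0^\perp)$ to equal $S$ and $S^\perp$ respectively.

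First I would select three distinct points $q_1,q_2,q_3\in S_0$ and three distinct points $p_1,p_2,p_3\in S$. After verifying that each triple is pairwise skew (see below), Proposition \ref{3trans} produces a conformal map $f:\mathbb{R}^{2,2}\to\mathbb{R}^{2,2}$ with $f(q_i)=p_i$. Combining Corollary \ref{fN=Nf} with Proposition \ref{duality} then gives
\[
f(S_0^\perp) = f(N(q_1,q_2,q_3)) = N(f(q_1),f(q_2),f(q_3)) = N(p_1,p_2,p_3) = S^\perp.
\]
To obtain $f(S_0)=S$, I would next pick three distinct points $r_1,r_2,r_3\in S_0^\perp$, which are pairwise skew by the same argument as for $S_0$. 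Their images lie on $S^\perp$ by the identity just established, and conformal maps preserve null cones (Proposition \ref{confnullity}) and hence skewness, so $f(r_1),f(r_2),f(r_3)$ are pairwise skew points of $S^\perp$. A second application of Corollary \ref{fN=Nf} and Proposition \ref{duality} then yields
\[
f(S_0) = f(N(r_1,r_2,r_3)) = N(f(r_1),f(r_2),f(r_3)) = S.
\]

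The main obstacle, and the only non-formal step, is verifying that three distinct points on $S$ (and on $S_0$, $S_0^\perp$, $S^\perp$) can always be chosen pairwise skew. For the standard unit circles $S_0$ and $S_0^\perp$, which lie in definite planes, any two distinct points have positive square-distance, so this is automatic. When $S$ lies in a definite plane it is a genuine Euclidean circle and the same argument works. In the hyperbolic case I would parametrize the two branches of $S$ in an orthonormal basis of $\pi$ by $p_\pm(\theta)=O\pm c(\cosh\theta,\sinh\theta)$; a direct computation shows the square-distance between $p_{\varepsilon}(\theta)$ and $p_{\varepsilon'}(\theta')$ equals $2c^2\bigl(\varepsilon\varepsilon'\cosh(\theta-\theta')-1\bigr)$ up to sign, which is non-zero whenever the two points are distinct. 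The same parametrization, with $c^2$ replaced by $-c^2$, disposes of $S^\perp$, completing the argument.
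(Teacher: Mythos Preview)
Your proof is correct and follows essentially the same route as the paper's: use Proposition~\ref{3trans} to match three points, then apply Corollary~\ref{fN=Nf} and Proposition~\ref{duality} twice to carry both conics over. The only differences are that you start by matching points of $S_0$ to points of $S$ (the paper matches $S_0^\perp$ to $S^\perp$ first), and that you explicitly verify the skewness of distinct points on a non-degenerate pseudo-circle, which the paper simply asserts.
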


\begin{proof}
Let $p,p',p''\in S^\perp$ and $p_0,p_0',p_0''\in S_0^\perp$. Any three distinct points on a non-degenerate pseudo-circle must be skew, so using the transitivity property we can find an $f$ such that $f(p_0)= p, f(p_0')=p'$ and $f(p_0'')=p''$. Then 
$$
f(S_0) = f(N(p_0,p_0',p_0'')) = N(p,p',p'')=S.
$$
Now choose $q_0,q_0',q_0''\in S$, and observe that $q:=f(q_0), q':=f(q_0'), q'':=f(q_0'')\in S$ as we have already seen $f(S_0) = S$. 

$$f(S_0^\perp) = f(N(q_0,q_0',q_0'')) = N(q,q',q'') = S^\perp.$$
\end{proof}
\vspace{0.1in}

Thus, every non-degenerate pair of conics can be mapped by a conformal map to the standard conjugate circles. By Theorem \ref{t:4} conformal maps preserve the mean value property and so the mean value property extends to any non-degenerate conics and we have completed the proof of Theorem \ref{t:1}.

\newpage
\appendix
\section{Examples}
Let 
$$
u(x_1,x_2, x_3,x_4) := \frac{\sqrt{10^6 p - 4(x_1+x_3)^2 - 4(x_2+x_4)^2 - (x_1^2 +x_2^2 - x_3^2 -x_4^2)^2}}{p}
$$
where $p=4 + (x_1 - x_3)^2 + (x_2 - x_4)^2$. Then $u$ is a solution of 

$$
\frac{\partial^2 u}{\partial x_1^2} + \frac{\partial^2 u}{\partial x_2^2} -
\frac{\partial^2 u}{\partial x_3^2} -
\frac{\partial^2 u}{\partial x_4^2} = 0.
$$

\includegraphics[scale = .5]{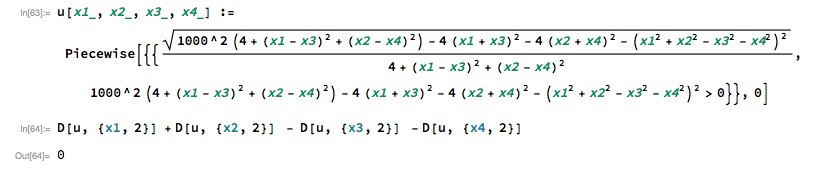}

We consider three skew points $q,q',q''\in \mathbb R^{2,2}$ and we want to see that the line integral of $u$ over $S(q,q',q'')$ is equal to that over $S^\perp(q,q',q'')$. Let $\pi$ be the affine plane defined by points $q,q',q''$ and let $P$ be its direction.

\subsection{Definite case}

Take the skew points $q=(8,0,0,0), q'=(7,1,0,0), q''=(6,0,0,0)$ with direction $P= {\mbox{span }}\{(1,0,0,0), (0,1,0,0)\}$. $P$ is non-degenerate so these 3 points have a center in $\pi$. The center is $$O=(7,0,0,0),$$ 
and the radius is $c^2 = 1$. We can compute $P^\perp = {\mbox{span }}\{(0,0,1,0), (0,0,0,1)\}$. 
$$
S(q,q',q'') = \{ (7 + \cos t, \sin t, 0,0) \ | \ t\in [0, 2\pi)\}
$$
$$
S^\perp(q,q',q'') = \{ (7,0,\cos t, \sin t) \ | \ t\in [0, 2\pi) \}
$$

\begin{center}\includegraphics[scale = .5]{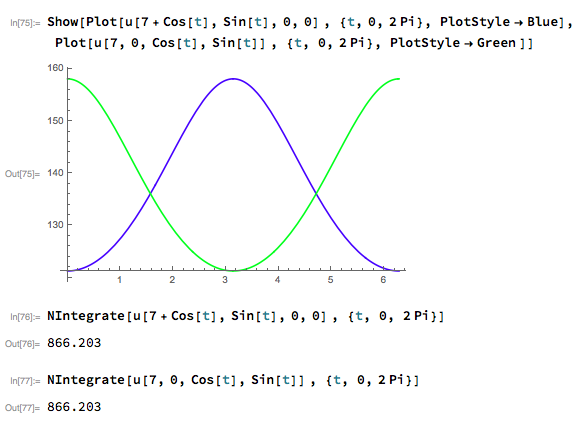}
\end{center}

\subsection{Hyperbolic case}
Take the skew points $q=(8,0,0,0), q'=(6,0,0,0), q''=(9,0,\sqrt{3},0)$ for which $P= {\mbox{span }}\{(1,0,0,0), (0,0,1,0)\}$. $P$ is non-degenerate therefore $q,q',q''$ have a center in $\pi$. The center is $O = (7,0,0,0)$ and the radius is $c^2=1$. We compute $P^\perp = {\mbox{span }}\{(0,1,0,0), (0,0,0,1)\}$. 
$$
S(q,q',q'') = \{(7\pm \cosh t, 0, \sinh t,0) \ | \ t\in \mathbb R \}
$$
$$
S^\perp(q,q',q'') = \{ (7,\sinh t,0, \pm \cosh t)\ | \ t\in \mathbb R \}
$$
\begin{center}
\includegraphics[scale = .5]{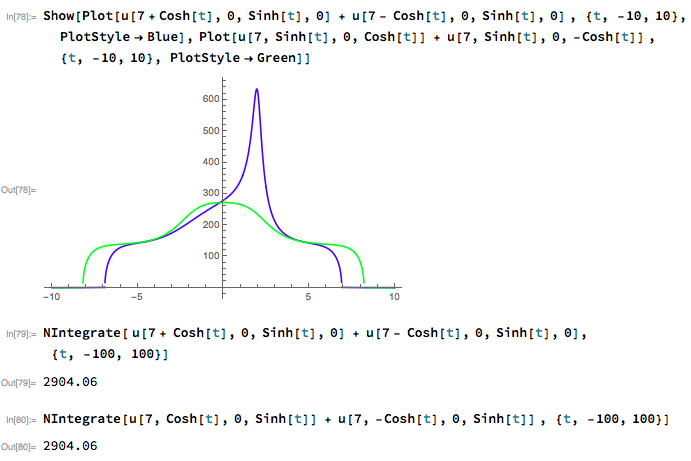}
\end{center}

\newpage

\section{Some Solutions of the UHE}

We now look at three examples of solutions of the UHE which arise as integrals of constant functions with simple support: a thickened plane, a 3-ball and $k$ separate 3-balls.

\vspace{0.1in}
\noindent{\bf Example 1}:

Consider the function $f:{\mathbb R}^3\rightarrow {\mathbb R}$ given by
\[
f(x^1,x^2,x^3)=\left\{\begin{array}{ll}
  1   &  {{for }}\;|x^3|\leq d_0\\
  0   & {{otherwise}}
\end{array}\right.
\]
where $d_0$ is a strictly positive real number.

\vspace{0.1in}
\begin{Prop}
The integral over lines of the function $f$ is a function $u:TS^2\rightarrow{\mathbb R}$ given by
\[
u(\xi,\eta)=\int_{(\xi,\eta)} f dl=\left\{\begin{array}{ll}
  \frac{1+\xi\bar{\xi}}{1-\xi\bar{\xi}}d_0   &  {{for }}\;|\xi|\neq1\\
  0   & {{for }}\;|\xi|=1\;{{ and }}\;2|\eta|>(1+\xi\bar{\xi})d_0\\
  \infty   & {{for }}\;|\xi|=1\;{{ and }}\;2|\eta|\leq(1+\xi\bar{\xi})d_0
\end{array}\right.
\]
\end{Prop}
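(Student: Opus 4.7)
My plan is to compute $u(\xi,\eta)$ directly from the parametrisation (\ref{e:minit}) of the oriented line $(\xi,\eta)$ as a curve in $\mathbb R^3$. Reading off the third coordinate,
\[
X_3(r)=-\frac{2(\bar\xi\eta+\xi\bar\eta)}{(1+\xi\bar\xi)^2}+r\,\frac{1-\xi\bar\xi}{1+\xi\bar\xi},
\]
I see that $X_3$ is affine in $r$ with slope equal (up to the normalisation of the parameter) to the cosine of the inclination of the line with the $x^3$-axis. Since $f$ is the indicator of the slab $\{|x^3|\le d_0\}$, the integrand $f\circ\Phi$ is the indicator of the set $\{r\in\mathbb R:|X_3(r)|\le d_0\}$, and $u(\xi,\eta)$ is simply the Euclidean arc length of that set.

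The argument then splits into two cases according to whether the slope of $X_3(r)$ vanishes. When $|\xi|\neq 1$ the slope is nonzero, so $\{r:|X_3(r)|\le d_0\}$ is a bounded interval; solving the linear inequality and converting the $r$-length to Euclidean arc length along the line produces the stated value $\tfrac{1+\xi\bar\xi}{1-\xi\bar\xi}d_0$. Geometrically this is the chord length $2d_0/|\cos\theta|$ cut out of a slab of Euclidean thickness $2d_0$ by a line of inclination $\cos\theta=(1-\xi\bar\xi)/(1+\xi\bar\xi)$ to the slab's normal. Note that $\eta$ drops out of the answer in this case, consistent with the translation invariance of $f$ along the $x^1$- and $x^2$-directions. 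When $|\xi|=1$ the slope vanishes and $X_3$ is constant along the line, equal to $-(\bar\xi\eta+\xi\bar\eta)/2$; the whole line either lies in the slab or does not, and one rewrites the threshold $|X_3|\le d_0$ in the homogeneous form $2|\eta|\le(1+\xi\bar\xi)d_0$ to read off the stated dichotomy $u=+\infty$ versus $u=0$.

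The main obstacle is the bookkeeping between the parameter $r$ of (\ref{e:minit}) and Euclidean arc length along the line, since $\partial\Phi/\partial r$ need not be a Euclidean unit vector for generic $\xi$: one must either rescale $r$ or insert the appropriate factor when passing from $dr$ to $dl$ so that the slope and the arc-length conversion produce the clean $(1+\xi\bar\xi)/(1-\xi\bar\xi)$ combination. Once this normalisation is pinned down, all three branches of the formula drop out by inspection of the affine function $X_3(r)$.
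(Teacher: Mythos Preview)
The paper does not supply a proof of this proposition; it is simply stated in Appendix~B. Your approach---parametrising the line via (\ref{e:minit}), observing that $X_3$ is affine in $r$, and reading off the chord length cut by the slab $\{|x^3|\le d_0\}$---is exactly the natural computation and is correct in outline. Two remarks on the details you should settle when you carry it out. First, check whether the direction vector $\partial\Phi/\partial r$ in (\ref{e:minit}) is Euclidean-unit; the standard stereographic formula for the unit direction is $\bigl(2\xi/(1+\xi\bar\xi),(1-\xi\bar\xi)/(1+\xi\bar\xi)\bigr)$, so you may find a missing factor of $2$ either in the parametrisation or in the stated answer $\tfrac{1+\xi\bar\xi}{1-\xi\bar\xi}d_0$ (the slab has thickness $2d_0$, so a chord of inclination $\cos\theta=(1-\xi\bar\xi)/(1+\xi\bar\xi)$ has length $2d_0(1+\xi\bar\xi)/|1-\xi\bar\xi|$). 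Second, at $|\xi|=1$ your own computation gives $X_3=-(\bar\xi\eta+\xi\bar\eta)/2=-\mathrm{Re}(\bar\xi\eta)$, so the honest threshold is $|\bar\xi\eta+\xi\bar\eta|\le 2d_0$, which is \emph{not} in general equivalent to the condition $2|\eta|\le(1+\xi\bar\xi)d_0$ printed in the proposition (take $\xi=1$, $\eta$ purely imaginary). Your ``rewriting'' step therefore does not go through as claimed; this looks like an imprecision in the statement rather than a flaw in your method, but you should record the correct condition that your calculation actually produces.
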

\vspace{0.1in}

Note that the function $u$ is infinite for some lines as the support of $f$ is not compact. Also, $u$ is the metric conformal factor, which satisfies the ultra-hyperbolic equation.

\vspace{0.1in}
\noindent{\bf Example 2}:

Consider the function $f:{\mathbb R}^3\rightarrow {\mathbb R}$ given by
\[
f(x^1,x^2,x^3)=\left\{\begin{array}{ll}
  1   &  {{for }}\;(x^1)^2+(x^2)^2+(x^3)^2\leq r^2_0\\
  0   & {{otherwise}}
\end{array}\right.
\]
where $r_0$ is a strictly positive real number.
\vspace{0.1in}
\begin{Prop}
The integral over lines of the function $f$ is
\[
u(\xi,\eta)=\left\{\begin{array}{ll}
  2\sqrt{r_0^2-\frac{4\eta\bar{\eta}}{(1+\xi\bar{\xi})^2}}   &  {{for }}\;2|\eta|\leq(1+\xi\bar{\xi})r_0\\
  0   & {{for }}\;2|\eta|>(1+\xi\bar{\xi})r_0
\end{array}\right.
\]
\end{Prop}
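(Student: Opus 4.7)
The plan is to interpret this line integral geometrically. Since $f=\mathbf{1}_{B_{r_0}}$ is the indicator of the closed Euclidean ball of radius $r_0$ centered at the origin of $\mathbb{R}^3$, the value $\int_{(\xi,\eta)} f\,dl$ is exactly the Euclidean length of the chord cut out by $B_{r_0}$ on the oriented line $\ell(\xi,\eta)\subset\mathbb{R}^3$ associated to the coordinates $(\xi,\eta)$. By elementary Euclidean geometry, if $d$ denotes the perpendicular distance from the origin to $\ell(\xi,\eta)$, this chord length equals $2\sqrt{r_0^2-d^2}$ when $d\le r_0$ and vanishes otherwise. The proof therefore reduces to computing $d$ in terms of $(\xi,\eta)$.

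To compute $d$ I would use the map $\Phi$ of equation (\ref{e:minit}). By the geometric description of $\Phi$ recalled in the text, the point $P_0:=\Phi((\xi,\eta),0)$ is the unique point of $\ell(\xi,\eta)$ closest to the origin, so $d=|P_0|$ in the Euclidean norm on $\mathbb{R}^3$. Setting $r=0$ in (\ref{e:minit}) gives
\[
X_1+iX_2=\frac{2(\eta-\xi^2\bar\eta)}{(1+\xi\bar\xi)^2},\qquad X_3=-\frac{2(\bar\xi\eta+\xi\bar\eta)}{(1+\xi\bar\xi)^2},
\]
and hence $d^2=|X_1+iX_2|^2+X_3^2$. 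The crux is the algebraic identity
\[
(\eta-\xi^2\bar\eta)(\bar\eta-\bar\xi^2\eta)+(\bar\xi\eta+\xi\bar\eta)^2=\eta\bar\eta(1+\xi\bar\xi)^2,
\]
which I would verify by direct expansion: the cross-terms $-\bar\xi^2\eta^2$ and $-\xi^2\bar\eta^2$ from the first product cancel against the $+\bar\xi^2\eta^2$ and $+\xi^2\bar\eta^2$ coming from the square, leaving $\eta\bar\eta(1+2\xi\bar\xi+\xi^2\bar\xi^2)=\eta\bar\eta(1+\xi\bar\xi)^2$. Multiplying through by $4/(1+\xi\bar\xi)^4$ yields $d^2=4\eta\bar\eta/(1+\xi\bar\xi)^2$.

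Finally, the threshold $d\le r_0$ becomes $4\eta\bar\eta\le(1+\xi\bar\xi)^2 r_0^2$, equivalently $2|\eta|\le(1+\xi\bar\xi)r_0$, producing the two cases stated in the proposition. The only step requiring genuine computation is the algebraic cancellation displayed above; everything else is routine packaging of the chord-length formula with the property that $\Phi(\cdot,0)$ is the foot of the perpendicular from the origin. A subtlety worth flagging is that the argument uses only the $r=0$ values from (\ref{e:minit}), so it is insensitive to whether the $r$-parameter is arc-length or not; the chord length is an intrinsic Euclidean quantity.
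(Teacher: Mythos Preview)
Your argument is correct. The paper states this proposition without proof, so there is nothing to compare against; your computation supplies exactly the details the authors omitted. The geometric reduction to the chord-length formula $2\sqrt{r_0^2-d^2}$ together with the identification of $d$ as the norm of $\Phi((\xi,\eta),0)$ is the natural approach, and the algebraic identity you display is checked correctly.
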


\vspace{0.1in}
\noindent{\bf Example 3}:

The previous example can be extended to any constant function $f:{\mathbb R}^3\rightarrow {\mathbb R}$ with support given by $k$ disjoint 3-balls of radii $\{r_j\}_{j=1}^k$ centres at $\{p_j\}_{j=1}^k$.

That is, suppose that
\[
f(x^1,x^2,x^3)=\left\{\begin{array}{ll}
  d_j   &  {{for }}\;(x^1-x^1_j)^2+(x^2-x^2_j)^2+(x^3-x^3_j)^2\leq r^2_j\;,\;1\leq j\leq k\\
  0   & {{otherwise}}.
\end{array}\right.
\]
For each point $p_j$ define the section of $TS^2\rightarrow S^2$
\[
\eta_j=\frac{1}{2}(x^1_j+ix^2_j-2x^3_j\xi-(x^1_j-ix^2_j)\xi^2)
\]
and the functions $\delta_j:TS^2\rightarrow\{0,1\}$
\[
\delta_j(\xi,\eta)=\left\{\begin{array}{ll}
  1   &  {{for }}\;4|\eta-\eta_j|^2\leq (1+\xi\bar{\xi})^2r_j\\
  0   & {{for }}\;4|\eta-\eta_j|^2> (1+\xi\bar{\xi})^2r_j.
\end{array}\right.
\]
\vspace{0.1in}
\begin{Prop}
The integral over lines of the function $f$ with support $k$ disjoint 3-balls is a linear superposition of the individual solutions of the UHE:
\[
u(\xi,\eta)=\sum_{j=1}^kd_j\delta_j\left[r_j^2-\frac{4|\eta-\eta_j|^2}{(1+\xi\bar{\xi})^2}\right]^{\scriptstyle{\frac{1}{2}}}
\]
\end{Prop}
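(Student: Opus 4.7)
The argument reduces to a combination of linearity of integration, translation invariance of the line element, and the single-ball calculation already carried out in Example 2. First I would observe that the integrand $f = \sum_{j=1}^k d_j \chi_{B_j}$ is a finite linear combination of characteristic functions of disjoint 3-balls $B_j$ of radius $r_j$ centered at $p_j=(x_j^1,x_j^2,x_j^3)$. By linearity of the line integral,
\[
u(\xi,\eta) \;=\; \int_{(\xi,\eta)} f\,dl \;=\; \sum_{j=1}^k d_j \, u_j(\xi,\eta), \qquad u_j(\xi,\eta):=\int_{(\xi,\eta)}\chi_{B_j}\,dl.
\]
So it suffices to compute each $u_j$ and show that it equals $\delta_j\bigl[r_j^2-4|\eta-\eta_j|^2/(1+\xi\bar\xi)^2\bigr]^{1/2}$ (modulo the factor of $2$ that appears in Example~2).

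Next I would exploit translation invariance. A translation of ${\mathbb R}^3$ by $p_j$ maps the ball $B_j$ onto the ball of radius $r_j$ centered at the origin, and preserves the flat arclength $dl$ along every line. It therefore induces a bijection of oriented lines under which the integral $u_j$ pulls back to the integral $u^{(0)}_{r_j}$ of Example~2. The essential step is to identify the induced map on $TS^2$. Since translations preserve direction, $\xi$ is fixed, and the claim (which I will verify below) is that the induced map is
\[
(\xi,\eta) \longmapsto (\xi,\eta-\eta_j), \qquad \eta_j=\tfrac{1}{2}\bigl(x^1_j+ix^2_j-2x^3_j\xi-(x^1_j-ix^2_j)\xi^2\bigr),
\]
exactly the section appearing in the proposition. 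Granting this,
\[
u_j(\xi,\eta) \;=\; u^{(0)}_{r_j}(\xi,\eta-\eta_j),
\]
and substituting the Example~2 formula gives immediately the indicator condition $4|\eta-\eta_j|^2\le (1+\xi\bar\xi)^2 r_j^2$ (which is the role of $\delta_j$) together with the square-root expression. Summing over $j$ yields the stated $u(\xi,\eta)$.

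The main technical step is verifying the formula for $\eta_j$. I would do this by a direct computation using the explicit correspondence (\ref{e:minit}) between $(\xi,\eta,r)$ and a point $(X_1,X_2,X_3)\in{\mathbb R}^3$ on the corresponding oriented line. Given a line $L$ with coordinates $(\xi,\eta)$, the translated line $L+p_j$ still has direction $\xi$, so it has coordinates $(\xi,\eta')$ for some $\eta'$; evaluating (\ref{e:minit}) on both lines and comparing the $X_1+iX_2$ and $X_3$ components, the condition that $L+p_j$ passes through $(X_1+x^1_j, X_2+x^2_j, X_3+x^3_j)$ whenever $L$ passes through $(X_1,X_2,X_3)$ forces $\eta'=\eta+\eta_j(\xi)$ with $\eta_j$ as above. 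This is the one genuine calculation; the rest is bookkeeping. The intersection condition defining $\delta_j$ is the statement from Example~2 that the line meets the translated ball iff the perpendicular distance from its center to the line, which in these coordinates is $2|\eta-\eta_j|/(1+\xi\bar\xi)$, is at most $r_j$. Finally, because each $u_j$ is a solution of the ultra-hyperbolic equation (as a line integral, or equivalently by Example~2 and translation invariance of the equation under the conformal action corresponding to translations in ${\mathbb R}^3$), the superposition $u=\sum_j d_j u_j$ is again a solution, as stated.
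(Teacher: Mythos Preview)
The paper does not actually supply a proof of this proposition: in Appendix~B the three propositions for Examples~1--3 are all stated without proof. Your argument---linearity of the line integral together with the observation that a Euclidean translation by $p_j$ acts on $TS^2$ as $(\xi,\eta)\mapsto(\xi,\eta+\eta_j(\xi))$, thereby reducing each $u_j$ to the single-ball formula of Example~2---is correct and is the natural proof the authors presumably had in mind. The verification of the translation formula via (\ref{e:minit}) is indeed the only genuine computation, and it is straightforward (equivalently, one checks that $\eta_j(\xi)$ is precisely the holomorphic section of $TS^2\to S^2$ consisting of all oriented lines through $p_j$, so the zero section is carried to it under translation by $p_j$).

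Two minor points worth recording. First, you rightly flag the missing factor of~$2$: the formula in Example~2 has a leading~$2$, so the superposition should read $u=\sum_j 2d_j\delta_j[\,\cdots\,]^{1/2}$; the statement of the proposition appears to drop this factor. Second, the paper's definition of $\delta_j$ has $4|\eta-\eta_j|^2\le(1+\xi\bar\xi)^2 r_j$, whereas consistency with Example~2 (and with your derivation) requires $r_j^2$ on the right-hand side; this is evidently a typo. Neither issue affects the validity of your argument.
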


\vspace{0.1in}

We can therefore interpret the linearity of the ultra-hyperbolic equation as the non-interaction of densities at different points.

\vspace{0.1in}
\noindent{\bf Acknowledgement}: During the development of this work the first author was supported by an Institute of Technology Tralee Postgraduate Research Scholarship.
\vspace{0.1in}


\begin{thebibliography}{9}

\bibitem{russian}
M. Akivisand V. Goldberg, Conformal Differential Geometry and its Generalizations, Wiley, 1996.

\bibitem{LA}
L. Asgeirsson, {\em \"Uber eine Mittelwertseigenschaft von L\"osungen homogener linearer partieller Differentialgleichungen 2. Ordnung mit konstanten Koeffizienten}, Math. Ann. {\bf 113.1} (1937) 321--346.
 
\bibitem{aubin}
T. Aubin, {\em Nonlinear analysis on manifolds. Monge-Ampere equations}, Springer, New York, 1982.

\bibitem{conebeam}
    R. Clackdoyle, L. Desbat, J. Lesaint, and S. Rit,  {\it Data Consistency Conditions for Cone-Beam Projections on a Circular Trajectory}, IEEE Signal Processing Letters {\bf 23.12} (2016) 1746--1750.

\bibitem{Cogui2}
G. Cobos and B. Guilfoyle, {\it The degenerate extension of Asgeirsson's mean value theorem}, in preparation.

\bibitem{couranthilbert}
    R. Courant and D. Hilbert, Methods of Mathematical Physics, Vol II. John Wiley and Sons, 2008.

\bibitem{johntransform}
    T. Georgiev, H. Qin and H. Li, {\it John Transform and Ultrahyperbolic Equation for Lightfields}, (2019) Preprint \url{https://arxiv.org/abs/1907.01186}.

\bibitem{lightfieldcoordinates}
   H. Li, H. Qin and T. Georgiev, {\it Lightfield Coordinates Adapted to Asgeirsson’s Theorem}, (2019) Preprint: \url{https://arxiv.org/abs/1909.07923}.
%

\bibitem{fourierrebinning}
    M. Defrise, M.E. Casey, C. Michel and M. Conti,  {\it Fourier rebinning of time-of-flight PET data}, Phys. Med. Biol. {\bf 50} (2005) 2749--2763.
    
\bibitem{defrisetliu}
    M. Defrise and X. Liu,  {\it A fast rebinning algorithm for 3D positron emission tomography using John's equation}, Inverse Problems {\bf 15.4} (1999) 1047--1065.

\bibitem{denisiuk}
    A. Denisiuk,  {\it On range condition of the tensor X-ray transform in $\mathbb R^n$},  Inverse Probl. Imaging, {\bf 14.3} (2020) 423--435.

\bibitem{kahlermetric}
   B. Guilfoyle and W. Klingenberg, {\it An indefinite Kaehler metric on the space of oriented lines}, J. London Math. Soc. {\bf 72} (2005) 497--509.  
   
\bibitem{gak2}
B. Guilfoyle and W. Klingenberg, {\it Proof of the Toponogov Conjecture on complete surfaces} (2020) Preprint: \url{https://arxiv.org/abs/2002.12787}.

 
\bibitem{hcv}
D. Hilbert and S. Cohn-Vossen, Geometry and the Imagination, American Mathematical Soc., 1999.


\bibitem{fjohn}
    F. John,  {\it The ultra-hyperbolic differential equation with four independent variables}, Duke Math. J. {\bf 4.2} (1938) 300--322.

\bibitem{venkateswaran}
    V.P. Krishnan, R. Manna, S. K. Sahoo and V.A. Sharafutdinov,  {\it Momentum Ray Transforms, II: Range Characterization In the Schwartz space}, Inverse Problems {\bf 36.4} (2020) 045009.
   
\bibitem{lichen}
    Y. Li and G.-H. Chen,  {\it Data Consistency Driven CT Image Reconstruction}, Diss. University of Wisconsin-Madison, 2018.

\bibitem{nadirashvili}
    N.S. Nadirashvili, V.A. Sharafutdinov and S. G. Vlăduţ,  {\it The John equation for tensor tomography in three-dimensions}, Inverse Problems {\bf 32.10} (2016) 105013.
    
\bibitem{rosenfeld}
B. Rosenfeld, Geometry of Lie Groups, Springer, 1997.

\bibitem{salvai}
M. Salvai, {\it Global smooth fibrations of ${\mathbb R}^3$ by oriented lines}, Bull. Lond. Math. Soc. {\bf 41.1} (2009) 155--163.

\bibitem{Top}
V.A. Toponogov, {\it On conditions for existence of umbilical points on a convex surface}, Siberian Math. J. {\bf 36} (1995) 780--784.

\bibitem{Uhlmann1}
G. Uhlmann and A.  Vasy, {\it The inverse problem for the local geodesic ray transform} Invent. math. {\bf 205.1} (2016) 83--120.

\bibitem{Uhlmann2}
L. Pestov and G. Uhlmann, {\it On characterization of the range and inversion formulas for the geodesic X-ray transform}, Int. Math. Res. Not. {\bf 2004.80} (2004) 4331--4347.

  
\end{thebibliography}
\end{document}